\tikzstyle{mynode} = [circle, draw, minimum size=0.1cm, inner sep=0pt]
\tikzstyle{mylist} = [inner sep=2, font=\footnotesize]
\tikzstyle{myedge} = [midway, above, inner sep=1mm, font=\footnotesize]
\tikzstyle{myconflictx} = [pos=0.1, inner sep=1mm, font=\footnotesize]
\tikzstyle{myconflicty} = [pos=0.9, inner sep=1mm, font=\footnotesize]
\newtheorem{theorem}{Theorem}[section]
\newtheorem{problem}[theorem]{Problem}
\newtheorem{lemma}[theorem]{Lemma}
\newtheorem{proposition}[theorem]{Proposition}
\newtheorem{corollary}[theorem]{Corollary}
\newcommand{\single}{\chi_{\nleftrightarrow}}
\newcommand{\ch}{\mathrm{ch}}
\newcommand{\chsep}{\mathrm{ch}_{sep}}
\newcommand{\chad}{\mathrm{ch}_{ad}}
\newcommand{\sing}{\chi_{\nleftrightarrow}}
\begin{document}
\title{Single conflict coloring, adaptable choosability and separation 
choosability\footnote{This paper is partially based on the thesis written 
by the second author under supervision of the first author}}

\author{
{\sl Carl Johan Casselgren}\thanks{{\it E-mail address:} 
carl.johan.casselgren@liu.se}\\ 
Department of Mathematics \\
Link\"oping University \\ 
SE-581 83 Link\"oping, Sweden
\and
{\sl Kalle Eriksson}\thanks{{\it E-mail address:} 
kalle.eriksson@math.su.se}\\ 
Department of Mathematics \\
Stockholm University \\ 
SE-106 91 Stockholm, Sweden
}

\maketitle


\begin{abstract}
We study relations between three interrelated notions 
of graph (list) coloring:  single conflict coloring, adapted list coloring and choosability
with separation (with $1$ overlapping color between 
lists of adjacent vertices), and 
their respective invariants single conflict chromatic number $\single$, 
adaptable choosability $\ch_{ad}$ and separation 
choosability $\ch_{sep}$.
We 
investigate graphs with small values of these invariants,
and construct explicit families of graphs $G$ with
$\single(G) = \ch_{ad}(G) > \ch_{sep}(G)$, as well as where all three invariants are equal.
Furthermore, we consider planar graphs and investigate for which 
triples $(a,b,c)$, there is a planar graph $G$ with 
$(\ch_{sep}(G), \ch_{ad}(G), \single(G)) = (a,b,c)$.
Throughout the paper we pose many questions
on these graph coloring parameters, and discuss connections to related coloring 
invariants such as 
adapted coloring.
\end{abstract}

\noindent
{\bf Keywords:}  Single conflict coloring; Adaptable choosability; Separation choosability;
Choosability with separation

\noindent
{\bf Mathematics Subject Classification:}  05C15

\section{Introduction}
	  Given a graph $G$, assign to each vertex $v$ of $G$ a set
        $L(v)$ of colors.
	Such an assignment $L$ is called a 
    \emph{list assignment} for $G$ and
    the sets $L(v)$ are referred
    to as \emph{lists} or \emph{color lists}.
    If there is a proper coloring $\varphi$ of $G$ such that 
    $\varphi(v) \in L(v)$ for all $v \in V(G)$, then
	$G$ is \emph{$L$-colorable} and $\varphi$
    is called an \emph{$L$-coloring} or simply a {\em list coloring}. 
	Furthermore, $G$ is
    called \emph{$k$-choosable} if it is $L$-colorable
    for every list assignment $L$ where all lists have size at least $k$. 
	The minimum $k$ such that $G$ is $k$-choosable is the 
	{\em list-chromatic number} (or {\em choice number})  $\ch(G)$ of $G$.
	
Being first introduced in the 70's \cite{ERT, Vizing}, list coloring is a well-established area of graph coloring. Indeed, many graph coloring models  translate naturally to the list coloring setting, and numerous variants of list coloring are studied in the literature. In this paper we study relations between three different variants of list coloring.

Given an edge coloring $f$ of a graph $G$, a coloring $c : V(G) \to \mathbf{Z}$ is {\em adapted} to $f$ if no edge $xy$ satisfies
that $c(x) = c(y) = f(xy)$. Adapted coloring was introduced by
Hell and Zhu \cite{HellZhu}, who
among other things proved that a graph is adaptably $2$-colorable
if and only if it contains an edge whose removal yields a bipartite graph. They also proved general bounds on the 
{\em adaptable chromatic number} $\chi_a(G)$,
that is, the least number of colors needed for an adapted coloring of a graph $G$.

Building on the notion of adapted coloring, Kostochka and Zhu
\cite{KostochkaZhu} studied the list coloring version of adapted coloring, where the vertices are assigned lists, and we seek
a list coloring that is adapted to a given edge coloring. The minimum size of the lists which ensures the existence of such an
adapted list coloring is the {\em adaptable choice number}
$\ch_{ad}(G)$ (or {\em adaptable choosability}) of a graph $G$. Kostochka and Zhu proved a general upper bound on this invariant and also characterized graphs with adaptable choosability $2$.

Another variant of list coloring is so-called
choosability with separation, introduced by Kratochvil, Tuza and Voigt
\cite{KratochvilTuzaVoigt}. In this model, we require that color lists of adjacent vertices have bounded intersection,
and ask for the minimum size of the lists which guarantees existence of a proper list coloring of the graph in question.
We shall focus on the variant where lists of adjacent vertices have
at most one common color, often referred to as the {\em separation choosability}.
Denote by $\ch_{sep}(G)$ the smallest size of the lists which ensures that $G$ is colorable
from such lists, provided that lists of adjacent vertices intersect in at most one color. This
particular variant was specifically studied in e.g.\ \cite{EsperetKangThomasse, FurediKostochkaKumbhat}.

Generalizing both the notion of adaptable choosability and separation choosability,
single conflict coloring was introduced by Dvorak et al.\ \cite{DvorakEsperetKangOzeki}. 
A {\em local $k$-partition} of a graph $G$ is a collection $\{c_v\}_{v \in V(G)}$ of maps 
	$c_v : E(v) \to \{1,\dots, k\}$, where $E(v)$ denotes the set of edges incident
	with $v$. We think of $c$ as a conflict map, and for each
	edge $e = xy \in E(G)$, $c$ yields a conflict-pair 
	$(c_{x}(e), c_{y}(e))$. Given such a local $k$-partition $\{c_v\}$,
	$G$ is {\em conflict $\{c_v\}$-colorable} if there is a map 
	$\varphi: V(G) \to \{1,\dots, k\}$ such that no edge $e = xy$ 
	satisfies that $\varphi(x) = c_{x}(e)$ and $\varphi(y) = c_{y}(e)$.
	The {\em single conflict chromatic number} $\single(G)$ of
	$G$ is the smallest $k$ such that
	$G$ is conflict $\{c_v\}$-colorable for every local $k$-partition $\{c_v\}$.

As pointed out in \cite{DvorakEsperetKangOzeki}, separation choosability, 
adaptable choosability, and single conflict coloring are strongly related.
Indeed, we have
\begin{equation}
\label{eq:olika}
	\single(G) \geq \ch_{ad}(G) \geq \ch_{sep}(G),
\end{equation}
where the latter inequality has been observed in several places in the 
literature 	
\cite{EsperetKangThomasse, CasselgrenGranholmRaspaud, DvorakEsperetKangOzeki}.
Furthermore, the adaptable choosability is at most the ordinary 
choosability, so
we have that $$\ch(G) \geq \ch_{ad}(G) \geq \ch_{sep}(G).$$
Let us also note the following inequality for the single conflict chromatic number.

\begin{proposition}
\label{prop:chsingle}
	For every positive integer $k$, there are graphs $G_k$ and $H_k$
	such that $\single(G_k)- \ch(G_k) \geq k$, and $\ch(H_k) - \single(H_k) \geq k$.
\end{proposition}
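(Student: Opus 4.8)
The proposition has two halves, and I will treat them separately since the two families require very different ideas.

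First half: $\single(G_k) - \ch(G_k) \geq k$. Here I would look for graphs with small choosability but large single conflict chromatic number. The natural candidate is a complete bipartite graph $K_{n,n}$, or more generally $K_{m,n}$ with $m$ huge: its choice number grows only like $\log n$, whereas a local $k$-partition on $K_{m,n}$ can be made very adversarial. Concretely, I would take $A$ and $B$ to be the two sides, and try to show that if $k$ is fixed and $|A|, |B|$ are large enough (say $|B| > k^k$ with $|A|$ correspondingly larger), then there is a local $k$-partition with no conflict-free coloring. The point is that a coloring $\varphi$ assigns each vertex of $A$ one of $k$ colors, so some large subset $A' \subseteq A$ is monochromatic, say all colored $i$; then for each vertex $b \in B$, the edges from $b$ to $A'$ can be pre-assigned conflict values so that for $A'$-side the conflict value is always $i$, and then one only needs, for each possible color $j$ that $\varphi(b)$ might take, a vertex $b$ whose $b$-side conflict map sends some edge to $A'$ to value $j$; a counting/pigeonhole argument over the colorings of $B$ then forces a conflict. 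This should give $\single(K_{m,n}) \geq k+1$ while $\ch(K_{m,n})$ stays bounded, and then padding with disjoint cliques or adjusting $m,n$ pins the gap to be at least $k$. Alternatively, and perhaps more cleanly, one can cite or adapt the known fact from \cite{DvorakEsperetKangOzeki} that $\single$ of $K_{n,n}$ is unbounded; the main obstacle is making the adversarial local $k$-partition explicit and verifying the pigeonhole bookkeeping.

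Second half: $\ch(H_k) - \single(H_k) \geq k$. Here I want graphs with large choosability but small single conflict chromatic number. Since $\single(H) \geq \ch_{ad}(H) \geq \ch_{sep}(H)$, small $\single$ is a genuine constraint, but there is a classical family that works: take $H_k = K_{n,n}$ again — wait, that has the wrong behavior — instead take a graph that is a blow-up of an edge or a large bipartite-like structure where $\single$ is small. The cleanest choice is $K_{n,n}$ does not help, so I would use the construction showing $\ch_{sep}$ (hence $\single$ need not be small) — rather, the right example is the graph $\theta$ or a suitable graph where separation choosability is $2$ or $3$ while ordinary choosability is large; bipartite graphs with high choice number (e.g.\ $K_{t,t^t}$) have $\ch_{sep} = 2$, but $\single$ of $K_{t,t^t}$ may be large by the first half, so I must instead observe that $\single(K_{n,n})$, while unbounded, grows only like $O(\log n / \log\log n)$ or similar, far slower than nothing — this needs care. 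The safest route: take $H_k$ to be a graph built from many disjoint copies arranged so that $\single(H_k)$ is controlled by a local argument (e.g.\ a forest-like or bounded-degeneracy gadget) while $\ch(H_k)$ is inflated by a separate bipartite-amplification gadget that shares no conflict-structure; for instance $H_k = $ disjoint union is useless since $\ch$ and $\single$ both take the max, so I would instead use a graph where bounded maximum average degree forces $\single$ small (via the general upper bounds for $\single$ in terms of maximum average degree / degeneracy from \cite{DvorakEsperetKangOzeki}) while a bipartite subgraph of small mad still has large choice number — indeed $K_{t,t^t}$ has average degree close to $2$, so its $\single$ is bounded by an absolute constant by such a theorem, yet $\ch(K_{t,t^t}) \to \infty$. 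That contradicts the first half only if I claimed $\single(K_{m,n})$ unbounded for fixed degree — so I must take $m$ and $n$ of comparable size in the first half (dense bipartite) and $m \gg n$ in the second (sparse bipartite). With that dichotomy clarified, the two families are $G_k = K_{n,n}$ with $n$ large (dense) and $H_k = K_{t, t^t}$ (sparse), and the main obstacle is citing or proving the correct upper bound $\single(G) \leq f(\mathrm{mad}(G))$ to control $\single(H_k)$, together with the standard lower bound $\ch(K_{t,t^t}) \geq t+1$.

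I expect the dense bipartite construction (the adversarial local $k$-partition and its pigeonhole analysis) to be the main technical obstacle; the sparse side follows from standard results once the right upper bound on $\single$ in terms of average degree is invoked.
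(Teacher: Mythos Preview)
Your first half is fine and matches the paper: take $G_k = K_{n,n}$ for $n$ large, cite the lower bound $\single(G) = \Omega(\sqrt{d/\log d})$ from \cite{DvorakEsperetKangOzeki} (so $\single(K_{n,n}) = \Omega(\sqrt{n/\log n})$) together with $\ch(K_{n,n}) = (1+o(1))\log_2 n$ from \cite{FurediKostochkaKumbhat}. There is no need to build the adversarial local $k$-partition by hand.

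Your second half, however, has a genuine gap. The claim that ``$K_{t,t^t}$ has average degree close to $2$'' is false: $K_{t,t^t}$ has $t^{t+1}$ edges on $t+t^t$ vertices, so its average degree is roughly $2t$, not $2$. Worse, even if you retreat to the degeneracy bound (which does hold: $K_{t,t^t}$ is $t$-degenerate, so $\single(K_{t,t^t}) \le t+1$), you gain nothing, because $\ch(K_{t,t^t}) = t+1$ as well --- indeed Proposition~\ref{Lemma: Bipartite invariants k -> k+1} shows that all four invariants coincide on $K_{t,t^t}$. So no complete bipartite graph, dense or sparse, can serve as $H_k$.

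The paper instead takes $H_k$ to be a complete graph $K_n$. Here $\ch(K_n) = n$, while Proposition~\ref{prop:maxdeg} gives $\single(K_n) \le \lceil (n-1)/2 \rceil + 1$, so the gap $\ch(K_n) - \single(K_n)$ is at least roughly $n/2 - 1$ and can be made as large as desired. The key ingredient you were missing is the maximum-degree upper bound on $\single$ (rather than a degeneracy or mad bound), applied to a graph where $\ch$ equals $\Delta + 1$.
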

	This proposition follows by taking $G_k$ to be 
         be the complete bipartite graph on $k+k$ vertices
	and $H_k$ the complete graph of order $k$, and applying results
	from \cite{DvorakEsperetKangOzeki, FurediKostochkaKumbhat}.
	In particular, this implies that the difference between
	$\single$ and $\ch_{ad}$ can be arbitrarily large.

Dvorak and Postle \cite{DvorakPostle} introduced correspondence coloring
(also known as DP-coloring) as a generalization of list coloring. 
Roughly, the idea is to replace every vertex in a graph $G$ 
by a $k$-clique, and adding a matching between any pair
of cliques corresponding to adjacent 
vertices. (The vertices in the $k$-clique correspond to the colors in a 
list of size $k$.)
If there is an independent set
in the resulting graph of size $|V(G)|$ for any choice of the matchings 
between	cliques corresponding to adjacent vertices in $G$, then the 
graph is {\em $k$-DP-colorable}.
The {\em DP-chromatic number $\chi_{DP}(G)$} is the minimum $k$ such 
that $G$ is $k$-DP-colorable\footnote{See \cite{DvorakPostle} for a 
formal definition.}.  Clearly, $\ch(G) \leq \chi_{DP}(G)$
for any graph $G$, since ordinary list coloring is the case when the 
matching between different cliques joins identical colors of the 
corresponding color lists.
This generalization of ordinary list coloring has been widely studied 
e.g.\ for planar graphs. 
In contrast to Proposition \ref{prop:chsingle}, the single conflict 
chromatic number is bounded by the DP-chromatic number.
Indeed, it is clear that $\single(G) \leq \chi_{DP}(G)$
for every graph $G$, although this is not explicitly mentioned in 
\cite{DvorakEsperetKangOzeki}.

A main aim of this note is to give a unified treatment and discuss similarities and differences between single conflict coloring, adaptable choosability, and separation choosability.
Throughout the paper
we point out that several results which were proved in the setting of adaptable
choosability or choosability with separation also hold in the setting of single conflict 
coloring. Along the way we also make some remarks about adapted coloring.

We characterize graphs with single conflict chromatic number $2$, and give a structural description of graphs
with separation choosability $2$. 
As it turns out, the graphs with single chromatic number
$2$ are precisely the graphs with adaptable choosability $2$, while the family with
separation choosability $2$ is much larger.
Using these results, we
construct a large family of graphs $\mathcal{G}$ such that
$\single(G) =\ch_{ad}(G) > \ch_{sep}(G)$ for every $G \in \mathcal{G}$. 
We also give an infinite family of graphs where all three invariants are equal.

Next, we consider planar graphs. Our investigation of the invariants $\ch_{sep}, \ch_{ad}$
and $\single$
on planar graphs is somewhat similar to the one in 
\cite{Abe} where the parameters $\chi, \ch$ and
$\chi_{DP}$ are compared.
There is quite a number of papers on adaptable choosability as well as 
separation choosability of planar graph \cite{HellZhu,CasselgrenGranholmRaspaud,ChenLihWang,
EsperetMontassierZhu,Choi, ChenLihWang}, but few where both graph coloring invariants are treated.
Dvorak et al.\ \cite{DvorakEsperetKangOzeki} noted that planar graphs have single conflict chromatic 
number at most $4$, and triangle-free at most $3$.
 Here we point out that large families of planar 
 graphs with triangles
have single conflict chromatic number $3$.

It was proved by Choi et al.\ \cite{Choi} that planar graphs 
without $4$-cycles are separation $3$-choosable,
while there are such graphs with adaptable choosability $4$ \cite{EsperetMontassierZhu}, 
so these invariants
may have different values on the family of planar graphs. 

We give an example of a multigraph $G$ with edge multiplicity
$2$, satisfying that $\single(G) = 4$,
$\ch_{ad}(G)=\ch_{sep}(G)=3$; it remains an open question whether
there exists such a planar graph without multiple edges.
Additionally, we give an example of a planar graph $G$ where
$4=\single(G) = \ch_{ad}(G) > \ch_{sep}(G) = \chi_{ad}(G) =3$.

We conclude our investigation of planar graphs by characterizing for which sequences $(a,b,c)$
there is a planar graph $G$ with $(\ch_{sep}(G), \ch_{ad}(G), \single(G))=(a,b,c)$, except for the open question
mentioned in 
the preceding paragraph, and
that it remains an open problem whether all planar graphs 
are separation $3$-choosable (which was first suggested by Skrekovski
\cite{Skrekovski}).

\section{Characterization of $2$-list assignments}

In this section we prove some results on graphs with small values of the aforementioned
chromatic invariants.
Let us start with the characterization of $2$-choosable graphs, which was proved by
Erdös et al.\ \cite{ERT}. The {\em core} of a graph $G$ is obtained by successively removing
vertices of degree $1$ from  $G$. We define the graph $\Theta_{i,j,k}$ by taking two
vertices and joining them by $3$ internally disjoint paths of lengths $i,j,k$, respectively. Such graphs are
usually referred to as {\em theta graphs}.

\begin{theorem}
\label{th:2choosable} \cite{ERT}
	A connected graph is $2$-choosable if and only if its core is isomorphic to a 
	cycle of even length or
	to $\Theta_{2,2,2k}$, for some $k \geq 1$.
\end{theorem}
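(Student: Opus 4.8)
The plan is to prove Theorem~\ref{th:2choosable} in two directions, following the original argument of Erd\H{o}s, Rubin and Taylor, but laid out so that the structural case analysis is transparent. First observe that $2$-choosability is unaffected by deleting a vertex of degree $1$: if $v$ has a unique neighbor $u$, then in any list assignment with $|L(v)|\ge 2$ we may first color the core and then pick a color in $L(v)$ distinct from $\varphi(u)$. Hence $G$ is $2$-choosable if and only if its core is, and we may assume $G$ equals its core, so $G$ has minimum degree at least $2$.

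For the ``if'' direction I would check directly that even cycles and the graphs $\Theta_{2,2,2k}$ are $2$-choosable. For an even cycle $v_1v_2\cdots v_{2n}$ with lists of size $2$: if some two consecutive lists differ, say $L(v_1)\ne L(v_2)$, break the cycle at that edge, choose $\varphi(v_1)\in L(v_1)$ appropriately and color greedily around the path, using at the last vertex the freedom that $L(v_1)\ne L(v_2)$; if all lists are equal, say $\{a,b\}$, properly $2$-color the cycle with $a,b$. For $\Theta_{2,2,2k}$, with branch vertices $x,y$ joined by two paths of length $2$ (through midpoints $p,q$) and one path of length $2k$, I would first color $x$ and $y$: if $L(x)\ne L(y)$ one can choose $\varphi(x),\varphi(y)$ so that neither $p$ nor $q$ is ``blocked'' (each of $p,q$ forbids at most one color from each of $x,y$, so as long as we avoid making $L(p)\subseteq\{\varphi(x),\varphi(y)\}$, which is possible since $|L(p)|=2$ and we have enough freedom), and then extend along the even path of length $2k$ exactly as in the even-cycle case; if $L(x)=L(y)$ we pick $\varphi(x)\ne\varphi(y)$ from that common list, color $p$ and $q$ (each has at least one free color since $\varphi(x)\ne\varphi(y)$ removes at most... wait---here one must be slightly careful, but the even length $2k$ of the long path provides the needed parity slack). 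This is routine once set up carefully.

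For the ``only if'' direction, assume $G$ is a connected graph of minimum degree at least $2$ that is not an even cycle and not any $\Theta_{2,2,2k}$; I must produce a size-$2$ list assignment with no proper coloring. The key structural step is: if $G$ is $2$-regular it is a single cycle, which must then be odd, and the list assignment giving every vertex $\{1,2\}$ has no proper coloring. If $G$ has a vertex of degree $\ge 3$, then $G$ contains a theta subgraph or a vertex lying on two cycles sharing structure; more precisely one shows $G$ contains, as a subgraph with all its vertices of degree $\ge 2$ in $G$, either a theta graph $\Theta_{i,j,k}$ with $\{i,j,k\}\ne\{2,2,\text{even}\}$ or two cycles joined by a path (including the degenerate case of two cycles sharing a single vertex). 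For each such ``bad'' configuration one exhibits an explicit bad $2$-list assignment (the classical ones: all lists $\{1,2\}$ on an odd cycle; and for theta graphs the assignment where the three internal paths are colored so that the constraints at $x$ and $y$ are mutually unsatisfiable, which fails precisely when some path has the wrong parity or when it is not the $(2,2,\text{even})$ pattern), and then extends this assignment arbitrarily to the rest of $G$ (choosing, vertex by vertex outward, any two-element list that keeps the already-bad core uncolorable---since the core is uncolorable, any completion is fine).

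The main obstacle is the combinatorial lemma in the ``only if'' direction: showing that a connected graph with minimum degree $\ge 2$ that is neither an even cycle nor a $\Theta_{2,2,2k}$ must contain one of the listed bad substructures with the right parity, and that each of those substructures genuinely admits a non-colorable $2$-list assignment. I would handle the first part by taking an ear decomposition or by analyzing a vertex of maximum degree and the block structure: in a $2$-connected graph that is not a cycle, any two vertices are joined by three internally disjoint paths (an easy consequence of Menger / Whitney), giving a theta subgraph, and one then argues about the parities of the three path lengths to see that avoiding all bad thetas forces exactly $\Theta_{2,2,2k}$; if $G$ is not $2$-connected, some block is a cycle or a theta and one finds two cycles linked by a path across a cut vertex. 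The parity bookkeeping for which thetas are $2$-choosable is where care is needed, but it reduces to the even-cycle analysis applied along each internal path between the two branch vertices.
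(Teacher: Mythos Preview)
The paper does not give its own proof of this theorem: it is quoted from \cite{ERT} as background, with no argument supplied. So there is nothing to compare your attempt against in the paper itself.

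Your sketch follows the classical Erd\H{o}s--Rubin--Taylor approach and is on the right track in outline, but as written it is not a proof. Two concrete issues: first, in the ``only if'' direction you write that in a $2$-connected graph which is not a cycle ``any two vertices are joined by three internally disjoint paths''; that is false (it would require $3$-connectivity). What you need and what is true is that \emph{some} pair of vertices is joined by three internally disjoint paths, i.e.\ the graph contains a theta subgraph; this follows from an ear decomposition. Second, in the ``if'' direction for $\Theta_{2,2,2k}$ you explicitly stop mid-argument (``wait---here one must be slightly careful''); the case analysis on the lists at the branch vertices and the midpoints $p,q$ really does require care and is not completed. The parity bookkeeping you defer at the end (which thetas admit a bad $2$-assignment and which do not) is the heart of the ERT characterization and cannot be waved through; one must actually exhibit bad list assignments for $\Theta_{i,j,k}$ whenever $(i,j,k)$ is not of the form $(2,2,2k)$ and verify there is none in the remaining case.
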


Since $\ch(G) \geq \ch_{ad}(G)$, the class of adaptably $2$-choosable graphs is larger
than the family of $2$-choosable graphs. 
Indeed, the following was proved in \cite{KostochkaZhu}.

\begin{theorem}
\label{th:2adapted} \cite{KostochkaZhu}
	A connected graph $G$ with minimum degree at least $2$ is adaptably 
	$2$-choosable if and only if $G$ consists 	
	of two or three internally disjoint paths connecting two distinct vertices of $G$.
\end{theorem}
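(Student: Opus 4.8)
The plan is to prove the two directions separately; I write ``$G$ has the \emph{required form}'' for ``$G$ is a cycle or a theta graph $\Theta_{i,j,k}$'', which is exactly what ``$G$ consists of two or three internally disjoint paths between two distinct vertices'' describes, since two such paths form a cycle and three form a theta graph.

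\emph{Sufficiency.} Fix a list assignment $L$ with $|L(v)|=2$ for all $v$ and an edge colouring $f$. An edge $e=xy$ forbids at most the single pattern $\varphi(x)=\varphi(y)=f(e)$, and only when $f(e)\in L(x)\cap L(y)$; after relabelling every list as $\{+,-\}$, each \emph{active} edge $e$ forbids exactly one pair $(\sigma_x(e),\sigma_y(e))$. The point is that if $e$ is incident with $v$ and we set $\varphi(v)$ to the element of $L(v)$ whose label is \emph{not} $\sigma_v(e)$, then $e$ cannot be violated. So it suffices to orient $E(G)$ so that every edge leaves one of its ends while every vertex has out-degree at most $1$: each vertex then kills its unique out-edge (choosing arbitrarily if that edge is inactive) and all edges are killed. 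A cycle admits such an orientation; for $\Theta_{i,j,k}$ with branch vertices $u,w$, orient one $u$--$w$ path towards $u$ and the other two towards $w$, so that every vertex has out-degree $\le1$ except $u$, which has out-degree $2$. A pigeonhole argument on the labels of the (at most three) active edges at $u$ lets us choose which path to point towards $u$ so that the two out-edges at $u$ can be killed simultaneously by one value of $\varphi(u)$. This produces an adapted $L$-colouring.

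\emph{Necessity.} Here I argue the contrapositive: a connected $G$ with $\delta(G)\ge2$ that does not have the required form is not adaptably $2$-choosable. Two ingredients. (i) Adaptable $2$-choosability is subgraph-monotone: from a bad pair $(L,f)$ on a subgraph $H$, give each vertex of $G$ not in $H$ two fresh colours and each edge of $G$ not in $H$ a colour missing from all lists, making those edges inactive; then any adapted colouring of $G$ restricts to one of $H$. (ii) Such a $G$ contains, as a subgraph, one of three ``base'' graphs: a \emph{handcuffs graph} — two cycles $C_1,C_2$ and a path $R$, possibly a single vertex, with $R$ meeting each $C_i$ in one vertex and $C_1,C_2$ otherwise disjoint, the degenerate case being two cycles sharing a single vertex — a subdivision of $K_4$, or a subdivision of the two-vertex multigraph with four parallel edges (i.e.\ four internally disjoint paths between two vertices). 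To prove (ii): if $G$ is not $2$-connected, a cut vertex splits $G$ into branches each containing a cycle (a short count from $\delta\ge2$ and simplicity), and two cycles from distinct branches give a handcuffs subgraph; if $G$ is $2$-connected it has an open ear decomposition $C_0,P_1,P_2,\dots$, and as $G$ is neither a cycle nor a theta graph there are at least two ears, after which a case analysis on the positions on the theta graph $C_0\cup P_1$ of the two endpoints of $P_2$ (both branch vertices; one branch vertex and one internal vertex; two internal vertices of a common path; two internal vertices of distinct paths) shows that $C_0\cup P_1\cup P_2$ already contains one of the three base graphs.

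Finally one checks that no base graph is adaptably $2$-choosable, using a ``signal'' gadget: along a path $v_0v_1\cdots v_\ell$, taking $L(v_j)=\{c_j,c_{j-1}\}$ and $f(v_{j-1}v_j)=c_{j-1}$ (with the $c_j$ chosen appropriately, overlapping with the endpoint lists where needed) forces $\varphi(v_0)=c_0\Rightarrow\varphi(v_\ell)=c_\ell$, and the same device around a cycle through a vertex $v$ forces $\varphi(v)$ to avoid a prescribed one of the two colours of $L(v)$ in every adapted colouring. For a handcuffs graph the two cycles then forbid both colours of the list at a common vertex (transmitting along $R$ when it is nontrivial), a contradiction; for the four-path graph with hubs $u,w$ the four paths are tuned so that each of the four pairs (colour of $L(u)$, colour of $L(w)$) is forbidden; and for the $K_4$-subdivision the six paths are tuned to simulate a fixed unsatisfiable configuration on $K_4$ itself (list $\{1,2\}$ at every vertex, opposite edges receiving equal colours, the three colour classes using both $1$ and $2$ — a short check over the sixteen colourings). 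By (i), $G$ is then not adaptably $2$-choosable, completing the contrapositive. I expect the structural reduction (ii) to be the delicate part: the ear-decomposition case analysis must be carried out carefully enough to exclude the degenerate, non-simple configurations, and the gadgets must keep the lists consistent at the vertices shared by several paths (the hubs, and the meeting points of a cycle with $R$) and accommodate paths of length $1$ by letting two endpoint lists share a colour.
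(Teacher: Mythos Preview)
The paper does not prove this theorem; it is stated as a result of Kostochka and Zhu and used as a black box, so there is no in-paper argument to compare against.

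As an independent assessment, your outline is sound. The sufficiency direction via an orientation with out-degree at most one, patched at the unique branch vertex of the theta graph by a pigeonhole choice among the three incident labels, is correct and clean. For necessity, the reduction to three base obstructions (handcuffs, $K_4$-subdivision, four internally disjoint paths) is exactly the right structural picture; your ear-decomposition case split on the endpoints of the second ear does yield one of the three in each case, and the not-$2$-connected case gives a handcuffs subgraph as you say. The signal-path gadget is also the right tool: for instance, two triangles sharing a vertex $v$ with $L(v)=\{1,2\}$, lists $\{1,3\}$ on the first triangle, $\{2,4\}$ on the second, and edge colours $1,1,3$ and $2,2,4$ respectively already kill both options for $\varphi(v)$, and this extends along subdivided edges and along the linking path $R$ by your transmission device.

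The only genuine soft spots are the ones you flag yourself: keeping the lists consistent at vertices where several gadget paths meet (the two hubs in the $K_4$- and four-path cases), handling length-$1$ paths by letting endpoint lists overlap, and writing the ear case analysis carefully enough to avoid degenerate overlaps. Your described bad $K_4$ configuration is slightly garbled (``three colour classes using both $1$ and $2$'' does not pin one down), but a concrete witness exists: with all lists $\{1,2\}$, colour one perfect matching $2$ and the complementary $4$-cycle $1$; a short check over the sixteen colourings shows none is adapted. None of these points is a conceptual gap, only a place where the sketch must be expanded into a full argument.
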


	In the following, we shall characterize graphs with single conflict chromatic
	number $2$, as well as give a partial characterization of graphs with separation choosability $2$.
	Let us also note that the graphs with DP-chromatic number $2$ are precisely the
	trees \cite{Abe}.

	We start with the case of single conflict coloring.
	By the inequality \eqref{eq:olika}, it suffices to check the graphs 
	in Theorem \ref{th:2adapted}. Straightforward case analysis
	yields that the graphs that are single conflict $2$-colorable are the same as the 
	adaptably $2$-choosable ones.
	A full proof appears in \cite{Eriksson}, and we omit it here.

\begin{proposition}
\label{prop:singconf2}
	A connected graph $G$ with minimum degree at least $2$ is single conflict 
	$2$-colorable if and only if $G$ consists 	
	of two or three internally disjoint paths connecting two distinct vertices of $G$.
\end{proposition}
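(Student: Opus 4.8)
The plan is to prove both directions by reducing to Theorem~\ref{th:2adapted} via the inequality chain \eqref{eq:olika}, and then resolving the one genuinely new direction — that adaptably $2$-choosable graphs of this shape are also single conflict $2$-colorable — by a direct argument. For the ``only if'' direction, suppose $G$ is single conflict $2$-colorable. By \eqref{eq:olika} we have $\single(G) \geq \ch_{ad}(G)$, so $G$ is adaptably $2$-choosable; since $G$ also has minimum degree at least $2$, Theorem~\ref{th:2adapted} immediately forces $G$ to consist of two or three internally disjoint paths joining two distinct vertices. So this direction is essentially free.

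The substance is the ``if'' direction: given that $G$ is the union of two or three internally disjoint $u$--$v$ paths, show $\single(G) \leq 2$, i.e.\ that for every local $2$-partition $\cv$ the graph is conflict $\cv$-colorable. I would split into cases according to the number of paths. When $G$ is two internally disjoint $u$--$v$ paths, $G$ is just a cycle (possibly with a chord if a path has length $1$, but more precisely an even or odd cycle, or a theta-like degenerate case), and on a cycle one can greedily $2$-color along the path, propagating the constraint: at each vertex $w$ with already-colored predecessor, the conflict forbidden by the edge $wx$ (with $x$ the predecessor) rules out at most one of the two colors at $w$, so a valid choice always exists; one only has to be careful when closing the cycle at the last vertex, where two edges impose constraints. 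For the three-path case the skeleton is a subdivision of $\Theta_{a,b,c}$; here I would color one of the three paths first as a path (always possible as above), which fixes $\varphi(u)$ and $\varphi(v)$, and then extend along the remaining two paths, where the only possible failure is again at the ``closing'' step of each path near $v$.

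The main obstacle — and where the real case analysis lives — is precisely the closing step: on a cycle of length $\ell$, after coloring $v_1, \dots, v_{\ell-1}$ greedily, the last vertex $v_\ell$ is constrained by \emph{two} incident edges ($v_{\ell-1}v_\ell$ and $v_\ell v_1$), which could together forbid both colors in $\{1,2\}$. To handle this I would not color blindly but keep a degree of freedom: color $v_1$ last, or equivalently observe that the propagation along a path leaves a choice at the far end that can be used to avoid the conflict on the closing edge, and when it cannot, backtrack one step. Concretely, one shows that a bad configuration at the closing edge of an even cycle can always be repaired by flipping the choice at some earlier vertex (parity of the cycle enters here, mirroring why odd cycles are \emph{not} $2$-list-colorable but the extra flexibility of conflict coloring — each edge forbids only one ordered pair, not a whole color — rescues them); and similarly for $\Theta$-graphs the two ``return to $v$'' edges are the only trouble spots, and the freedom accumulated along the two branches suffices. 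This is exactly the ``straightforward case analysis'' the text alludes to, and since the excerpt explicitly defers the full proof to \cite{Eriksson}, I would present the reduction and the cycle case in detail and sketch the $\Theta$ case, noting that all verifications are finite and elementary.
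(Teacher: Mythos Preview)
Your approach matches the paper's exactly: one direction is obtained for free from \eqref{eq:olika} together with Theorem~\ref{th:2adapted}, and the paper likewise reduces the ``if'' direction to a direct case analysis on cycles and theta graphs, which it omits and defers to \cite{Eriksson}. The small slip about a possible ``chord if a path has length $1$'' is harmless (two internally disjoint $u$--$v$ paths always form simply a cycle), and your identification of the closing-edge step as the only nontrivial point is precisely where the omitted case analysis lives.
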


	Next, we give an informal description of graphs 
        with separation choosability $2$ in the  theorem below. 
	First we need some definitions.
	Let $P = v_1 e_1 v_2 \dots e_{d-1} v_d$ be a path.
	The sequence $C = v_1 e_1 v_2 \dots v_d e_d v_1$ is an
	{\em (ordered) cycle} if $e_d = v_d v_1$.
	Similarly, the sequence $D = v_1 e_1 v_2 \dots v_d e_d v_j$
	is called an {\em (ordered) lollipop} if $e_d = v_d v_j$
	and $j \in \{2, \dots , d-2\}$.

	In \cite{Casselgren2}, the first author gave a characterization of which $2$-list assignments
	of a graph admits a list coloring of the graph.
	Indeed, using Lemma 9 in \cite{Casselgren2} it is straightforward to prove the following partial characterization of
	separation $2$-choosable graphs. Again, the full details appear in \cite{Eriksson},
	and we omit them here.

\begin{theorem}
\label{th:2sepchoice}
	If a graph $G$ is not separation $2$-choosable, then it contains
	two subgraphs $H_1$ and $H_2$ such that for $i=1,2$:
\begin{itemize}
	\item[(i)] $H_i$ is a lollipop or a cycle;

	\item[(ii)] the cycle contained in $H_i$ has length at least $4$;

	\item[(iii)] the first vertices of $H_1$ and $H_2$, respectively, are identical;

	\item[(iv)] the second vertices of $H_1$ and $H_2$ are distinct;

	\item[(v)] if $H_1$ or $H_2$ is a cycle, then the second to last vertices of
	$H_1$ and $H_2$, respectively, are distinct;

	\item[(vi)] if both $H_1$ and $H_2$ are cycles, then the second 
	and second to last vertices of $H_1$ and $H_2$,
 	respectively, are four distinct vertices.

\end{itemize}
\end{theorem}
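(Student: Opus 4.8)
The plan is to reduce the statement to the characterization of list-colorable $2$-list assignments given by Lemma 9 of \cite{Casselgren2}. Recall that a separation $2$-list assignment is a $2$-list assignment $L$ together with the extra constraint that $|L(u) \cap L(v)| \le 1$ whenever $uv \in E(G)$; the graph $G$ fails to be separation $2$-choosable precisely when some such $L$ admits no $L$-coloring. So the first step is to fix a separation $2$-list assignment $L$ witnessing the failure, and feed $L$ into the machinery of \cite{Casselgren2}, which describes the obstructions to list-colorability of an arbitrary $2$-list assignment in terms of certain ``bad'' substructures. I would extract from that description a pair of substructures, each of which is either a cycle or a lollipop in the sense defined just before the theorem, whose first vertices coincide; conditions (i) and (iii) are then immediate from the form of the obstruction, and the length restriction (ii) comes from the separation hypothesis, since a triangle would force two adjacent lists to coincide, contradicting $|L(u)\cap L(v)|\le 1$.

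The second step is to verify conditions (iv)--(vi), which are the genuinely new content relative to the unseparated case and encode ``how'' the two substructures are glued. The idea is that at the shared first vertex $v_1$, the obstruction to extending a coloring propagates along each of $H_1,H_2$ in a way that pins down the color of $v_1$ to a single forced value from each side; if the second vertices of $H_1$ and $H_2$ were the same vertex $w$, then the two edges $v_1 w$ would be the same edge, and tracing the forced colorings along both substructures would produce a single list assignment on a short structure that the separation condition already rules out — this gives (iv). Conditions (v) and (vi) are the analogous statements at the ``closing'' end of a cycle: when $H_i$ is a cycle rather than a lollipop, the last edge $e_d = v_d v_1$ also touches $v_1$, so the second-to-last vertex $v_d$ plays a symmetric role to the second vertex, and the same ``two edges collapse to one, violating separation'' argument applies. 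I would organize this as a short case analysis on whether each $H_i$ is a cycle or a lollipop, extracting the distinctness conditions from the incidence structure at $v_1$ in each case.

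The main obstacle, and the reason the theorem is only stated as a necessary condition (``if $G$ is not separation $2$-choosable, then \dots'') rather than a characterization, is controlling exactly how the obstruction from Lemma 9 of \cite{Casselgren2} interacts with the extra separation constraint: the lemma is about arbitrary $2$-list assignments, and a priori its bad substructures could use list assignments that are incompatible with $|L(u)\cap L(v)|\le 1$ on adjacent vertices. So the crux is to show that whenever a separation $2$-list assignment is non-colorable, the obstruction can be chosen to live on substructures of the stated shape and with the stated distinctness properties — in other words, that the separation condition, rather than destroying the obstruction, merely forces it into this more rigid ``two glued cycles/lollipops'' form. Since the paper defers the full argument to \cite{Eriksson}, I would present the reduction and the case analysis at the level of detail above and cite \cite{Casselgren2, Eriksson} for the verification that the list assignments appearing in the obstruction are compatible with separation; this is precisely the bookkeeping step that the excerpt says it omits.
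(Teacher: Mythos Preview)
Your proposal is correct and follows essentially the same approach as the paper: both reduce to Lemma~9 of \cite{Casselgren2} and then use the separation hypothesis to force the length and distinctness conditions, deferring the bookkeeping to \cite{Eriksson}. In fact your sketch is more detailed than the paper's own treatment, which simply asserts that the result is straightforward from Lemma~9 and omits the argument entirely.
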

	We remark that it is possible to give a more elaborate description 
	of separation $2$-choosability.
	Indeed, the condition that lists of adjacent vertices contain 
	at most one common color, 
	implies that $H_1$ and $H_2$ can only intersect in a rather restricted way. However, such a
	characterization would require stating a long list of rather technical conditions on 
	how the cycles in $H_1$ and
	$H_2$ may intersect, particularly in the case when these cycles have length $4$. 
	We believe that
	such a long list is not very informative, on the contrary quite technical and tedious to
	read, so we shall be content with the somewhat less precise characterization in 
	Theorem \ref{th:2sepchoice}.
	
Nevertheless, we note that a theta graph is thus separation 
$2$-choosable, 
as is also $K_4$,
and some, but not all, subdivisions of $K_4$.
$K_5$ is, however, not separation $2$-choosable, as it contains 
       an ordered lollipop and
an ordered cycle with exactly one common edge.
More generally, the following proposition is a partial converse of Theorem 
        \ref{th:2sepchoice}. The proof is a simple verification, so we omit it.

\begin{proposition}
\label{prop:disjointcycles}
If a connected graph has (at least) two
cycles of length at least $4$ with at most one common vertex,
 then it is 
not separation $2$-choosable.
\end{proposition}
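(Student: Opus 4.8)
The plan is to argue directly that a connected graph $G$ containing two cycles $C_1$ and $C_2$ of length at least $4$ sharing at most one vertex fails to be separation $2$-choosable, by exhibiting a bad $2$-list assignment. First I would reduce to a convenient subgraph: since $G$ is connected, there is a (possibly trivial) path $Q$ joining a vertex $u_1 \in V(C_1)$ to a vertex $u_2 \in V(C_2)$ with $Q$ internally disjoint from $C_1 \cup C_2$; if $C_1$ and $C_2$ already share a vertex we take $Q$ trivial. It suffices to show the subgraph $H = C_1 \cup Q \cup C_2$ is not separation $2$-choosable, since any separation $2$-list assignment of $H$ extends (by giving every other vertex of $G$ an arbitrary $2$-list) to a separation $2$-list assignment of $G$, and a proper $L$-coloring of $G$ restricts to one of $H$.

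Next I would build the bad list assignment on $H$. The core idea is the standard obstruction: on an even cycle the ``alternating'' lists $\{a,b\},\{b,c\},\{c,a\},\dots$ are colorable, but if one forces a particular vertex to a particular color the cycle can become uncolorable, and two cycles fighting over the shared structure create a genuine conflict. Concretely, on each $C_i$ I want to place $2$-lists, pairwise intersecting in at most one color on adjacent vertices, so that the only $L$-colorings of $C_i$ are forced to assign some prescribed ``forbidden'' value to the contact vertex $u_i$ (or along $Q$), exactly as in the $K_5$ example in the text where an ordered lollipop and an ordered cycle share a single edge. The cleanest route is to invoke Lemma~9 of \cite{Casselgren2} as used in the proof of Theorem~\ref{th:2sepchoice}: the two cycles of length $\ge 4$ meeting in at most one vertex, together with the connecting path, furnish exactly the pair of subgraphs $H_1, H_2$ (two cycles, or a cycle and a lollipop after absorbing $Q$) satisfying conditions (i)--(vi), so non-$2$-choosability follows immediately. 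If instead one prefers a self-contained argument, one propagates the list constraints around $C_1$ to pin down the color at $u_1$, transports the forbidden color along $Q$, and then around $C_2$ to pin down an incompatible requirement at the same vertex, deriving a contradiction with any would-be $L$-coloring.

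I expect the main obstacle to be the case $|V(C_1)\cap V(C_2)| = 1$, i.e.\ when the two cycles meet exactly at a single cut-like vertex $w$: here the connecting path is trivial and the two cycles genuinely share the vertex whose color we are trying to constrain, so the list assignment on $C_1$ and $C_2$ must be chosen so that $C_1$ forces $L(w)$ to exclude one color while $C_2$ forces it to exclude the other, leaving no admissible color at $w$; one must check this can be done while keeping all adjacent lists intersecting in at most one color (the lengths being $\ge 4$ gives enough room). The disjoint case $|V(C_1)\cap V(C_2)|=0$ is easier, as the path $Q$ decouples the two cycles and one only needs a ``one-sided'' forcing gadget on each. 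In either case the verification that the proposed lists form a valid separation $2$-list assignment and admit no proper coloring is routine, which is presumably why the authors say ``the proof is a simple verification.''
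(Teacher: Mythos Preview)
The paper omits the proof entirely, calling it ``a simple verification,'' so there is little to compare against at the level of details. Your self-contained approach---reduce to the subgraph $H=C_1\cup Q\cup C_2$, put a separated $2$-list assignment on each cycle that forces the contact vertex into a single color, and arrange the two forcings to be incompatible---is correct and is precisely the kind of direct verification the authors have in mind. (Concretely, on a cycle $u_0u_1\cdots u_{n-1}u_0$ of length $n\ge 4$ with $L(u_0)=\{1,2\}$, the lists $L(u_1)=\{1,c_1\}$, $L(u_i)=\{c_{i-1},c_i\}$ for $2\le i\le n-2$, $L(u_{n-1})=\{c_{n-2},1\}$ are separated and force $\varphi(u_0)\ne 1$; using fresh colors on the second cycle forces $\varphi(u_0)\ne 2$. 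The length hypothesis $\ge 4$ is exactly what makes $u_1$ and $u_{n-1}$ nonadjacent, so the repeated color $1$ in their lists does not violate separation.)

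There is, however, a logical slip in your ``cleanest route.'' Theorem~\ref{th:2sepchoice} is stated only as a necessary condition: \emph{if} $G$ is not separation $2$-choosable \emph{then} it contains subgraphs $H_1,H_2$ satisfying (i)--(vi). Exhibiting such $H_1,H_2$ therefore does not, by that theorem, yield non-$2$-choosability; indeed the paper explicitly describes Proposition~\ref{prop:disjointcycles} as a \emph{partial converse} of Theorem~\ref{th:2sepchoice}. If you want to go via \cite{Casselgren2}, you must invoke Lemma~9 there in the sufficiency direction (it characterizes colorable $2$-list assignments), not merely cite the conditions (i)--(vi) of Theorem~\ref{th:2sepchoice}. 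Your explicit construction avoids this issue and is the cleaner route here.
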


	By contrast, the following follows from Theorem \ref{th:2sepchoice}.

\begin{corollary}
\label{cor:triangle}
	Every graph with at most one cycle of length greater than $3$ is separation 
	$2$-choosable.
\end{corollary}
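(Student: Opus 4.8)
The plan is to derive this as a direct consequence of Theorem~\ref{th:2sepchoice} by a contrapositive argument. Suppose $G$ has at most one cycle of length greater than $3$ but is \emph{not} separation $2$-choosable; I want to reach a contradiction. By Theorem~\ref{th:2sepchoice}, $G$ contains two subgraphs $H_1$ and $H_2$, each a lollipop or a cycle, such that the cycle contained in $H_i$ has length at least $4$. The first observation is that a lollipop $D = v_1 e_1 v_2 \dots v_d e_d v_j$ contains a cycle of length at least $4$: the cycle is $v_j e_j v_{j+1} \dots v_d e_d v_j$, which has length $d - j + 1 \geq 4$ precisely because $j \in \{2,\dots,d-2\}$ forces $d - j \geq 2$ so the cycle has at least $3$ edges, and in fact one checks from $j \le d-2$ that it has length $\ge 3$; here I should double-check the index bookkeeping so that "length at least $4$'' in condition (ii) is actually what one gets (the lollipop's own cycle may a priori have length $3$, so the real content is that condition (ii) says we may assume it is at least $4$). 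In any case, conditions (i) and (ii) together guarantee that each of $H_1$, $H_2$ contains a cycle of length at least $4$.

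\textbf{Separating the two long cycles.} The crux is to show that these two cycles are distinct and meet in at most one vertex, which by Proposition~\ref{prop:disjointcycles} already gives "not separation $2$-choosable'' has a witness of the forbidden shape — but more to the point, it shows $G$ has \emph{two} cycles of length $>3$, contradicting the hypothesis. To see the cycles are distinct: by (iv) the second vertices $v_2$ of $H_1$ and $H_2$ differ, while their first vertices coincide by (iii); so the first edges $e_1$ of $H_1$ and $H_2$ are different edges both incident to the common first vertex. If $H_i$ is a cycle, its long cycle \emph{is} $H_i$ and contains $e_1^{(i)}$; if $H_i$ is a lollipop, one should check whether the long cycle (the "loop'' part $v_j \dots v_d v_j$) contains the first edge — it need not, so here I would instead argue directly that the two long cycles cannot be equal, because an equal cycle $C$ would have to be reachable as the cyclic part of both $H_1$ and $H_2$, and then trace through conditions (iii)–(vi) to see the attachment data forces a contradiction. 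The cleanest route is: the hypothesis "at most one cycle of length $>3$'' means $G$ has a \emph{unique} such cycle $C$ (or none, in which case we are immediately done since each $H_i$ supplies one). So both the long cycle of $H_1$ and the long cycle of $H_2$ must equal this same $C$.

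\textbf{Finishing.} With both long cycles equal to $C$, I examine the cases. If both $H_1, H_2$ are cycles, then $H_1 = H_2 = C$, but then their second vertices coincide, contradicting~(iv). If $H_1$ is a cycle and $H_2$ a lollipop, then $H_1 = C$ and $H_2$ is $C$ with a path attached; the path is attached at the lollipop's vertex $v_j$, and the lollipop's first vertex (which by~(iii) equals $H_1$'s first vertex, a vertex of $C$) is the start of that attached path, so $v_1^{(2)} \notin V(C)$ unless the path has length $0$ — contradicting (iii) which puts $v_1^{(2)}$ on $C$ since $v_1^{(1)} \in V(C)$; one has to be a little careful about degenerate attached paths, but a path of length $0$ makes $H_2$ not a genuine lollipop. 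If both are lollipops, both have their loop equal to $C$ and a path attached to $C$; conditions (iv) and the structure of lollipops then force the two attached paths to differ, yet the only cycle available to both is $C$ — and I would unwind (iv), using that the "first vertex'' of a lollipop is the free end of the stick and the "second vertex'' is the next one along, to get a contradiction with the stick lengths/attachment. I expect the main obstacle to be exactly this bookkeeping: carefully matching the ordered-walk terminology ("first vertex'', "second vertex'', "second to last vertex'') of Theorem~\ref{th:2sepchoice} to the cycle-and-stick picture, and handling the short-stick degenerate cases so that $H_i$ really is a lollipop or cycle as claimed. Once that is pinned down, the contradiction with the uniqueness of the long cycle $C$ is immediate. $\qed$
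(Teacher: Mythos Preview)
Your approach has a genuine gap. You are trying to prove the corollary via the contrapositive of Theorem~\ref{th:2sepchoice}: assume $G$ is not separation $2$-choosable, obtain $H_1, H_2$ satisfying (i)--(vi), and then argue that the long cycles in $H_1$ and $H_2$ must be distinct, contradicting the hypothesis. But Theorem~\ref{th:2sepchoice} is only a one-way implication, and conditions (i)--(vi) \emph{can} be satisfied in a graph with a single cycle of length $>3$, so in the ``both lollipops'' case there is no contradiction to extract.

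Concretely, take the $4$-cycle $C = c_1 c_2 c_3 c_4$ and adjoin two new vertices $w, b$ with edges $wc_1$, $wb$, $bc_1$. This graph has exactly one cycle of length greater than $3$ (namely $C$; the only other cycle is the triangle $w b c_1$). Yet
\[
H_1 = w,\, c_1,\, c_2,\, c_3,\, c_4,\, c_1 \qquad\text{and}\qquad H_2 = w,\, b,\, c_1,\, c_2,\, c_3,\, c_4,\, c_1
\]
are two ordered lollipops satisfying all of (i)--(vi): both have loop $C$ of length $4$, common first vertex $w$, distinct second vertices $c_1$ and $b$, and conditions (v) and (vi) are vacuous since neither $H_i$ is a cycle. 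So your plan to ``unwind (iv) \dots\ to get a contradiction with the stick lengths/attachment'' cannot succeed --- there is simply nothing in (i)--(vi) that is violated. (There is also a smaller slip in your both-cycles case: two ordered cycles on the same underlying cycle $C$ with the same first vertex may have \emph{different} second vertices if traversed in opposite directions, so (iv) alone is not contradicted; one needs (vi) to finish.)

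The paper itself gives no detailed proof, stating only that the corollary ``follows from Theorem~\ref{th:2sepchoice}''. In light of the remark immediately after that theorem --- that a genuine characterization requires further ``rather technical conditions on how the cycles in $H_1$ and $H_2$ may intersect'' --- the intended deduction evidently rests on that finer analysis (deferred to \cite{Eriksson}) rather than on the partial conditions (i)--(vi) alone. Your plan, as written, uses only the stated (i)--(vi), and that is not enough.
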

	
Since every graph with at least two disjoint triangles is not adaptably $2$-choosable,
Corollary \ref{cor:triangle} yields an infinite
family of graphs $\mathcal{G}$ such that if $G \in \mathcal{G}$, then 
$\ch_{ad}(G) \neq \ch_{sep}(G)$.

%
%

In general, we would like to suggest the following problem, which to the best of our knowledge
has not been considered in the literature.

\begin{problem}
	For every positive integer $k \geq 0$, is there a graph $G_k$ such that
	$\ch_{ad}(G_k) - \ch_{sep}(G_k) \geq k$?
\end{problem}

As noted after Proposition \ref{prop:chsingle} in the introduction, the analogous question for single 
conflict chromatic number compared to
adaptable choosability (and thus also separation choosability) 
is known to have a positive answer as observed in 
\cite{DvorakEsperetKangOzeki}.
Indeed, although the class of adaptably $2$-choosable graphs and the one with
	single conflict chromatic number $2$ coincide, these two invariants are far from equal in general.
	Furedi et al.\ \cite{FurediKostochkaKumbhat} proved that for
	complete $k$-partite graphs $K_{k\times n}$ with $n$ vertices in each part
	$$\ch_{sep}(K_{k\times n}) =\ch(K_{k\times n}) = 
	(1+o(1))\log_{k/(k-1)} n,$$
	which also means that the adaptable choosability has the same value, since
	$\ch_{sep}(G) \leq \ch_{ad} (G) \leq \ch(G)$ for every graph $G$.
	On the other hand, Dvorak et al.\ \cite{DvorakEsperetKangOzeki} proved that
	for every graph $G$ with average degree $d$, 
	$\single(G) = \Omega(\sqrt{d/\log d})$.
	So in general $\single$ and $\ch_{ad}$ may be arbitrarily far apart, although
	it is not clear how different the families with small values of 
	these invariants are.

\bigskip

On the other hand, there are infinite families of graph with maximum degree at most $4$
where all three invariants are equal.

\begin{proposition}
\label{prop:all3}
	If $G$ is a connceted graph with $\Delta(G) \leq 4$ containing (at least)
	two disjoint cycles of length at least $4$, then
	$$\ch_{sep}(G)=\ch_{ad}(G) = \single(G)= \ch(G)=3.$$
\end{proposition}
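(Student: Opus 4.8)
The plan is to squeeze all four parameters to $3$. For the lower bound, the cycles $C_1$ and $C_2$ are disjoint, so they have no common vertex and in particular at most one; hence Proposition~\ref{prop:disjointcycles} gives $\ch_{sep}(G)\ge 3$, and combining this with \eqref{eq:olika} and $\ch(G)\ge\ch_{ad}(G)$ shows that each of $\ch_{sep}(G),\ch_{ad}(G),\single(G),\ch(G)$ is at least $3$. Since $\single(G)\ge\ch_{ad}(G)\ge\ch_{sep}(G)$ and $\ch(G)\ge\ch_{ad}(G)\ge\ch_{sep}(G)$, it remains only to establish the two upper bounds $\single(G)\le 3$ and $\ch(G)\le 3$, and I would extract both from one greedy principle.

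The principle is the following: if $V(G)$ admits an ordering $v_1,\dots,v_n$ in which every $v_i$ has at most two neighbours in $\{v_{i+1},\dots,v_n\}$, then colouring in the order $v_n,v_{n-1},\dots,v_1$ from $\{1,2,3\}$ never fails, because when $v_i$ is reached each already coloured neighbour $v_j$ forbids at most one colour for $v_i$ (its own colour $\varphi(v_j)$ in the list-colouring setting; the value $c_{v_i}(v_iv_j)$, and then only if $\varphi(v_j)=c_{v_j}(v_iv_j)$, in the single conflict setting), leaving at least one colour available. Such an ordering exists precisely when $G$ is $2$-degenerate, so if $G$ has no subgraph of minimum degree at least $3$ we are already done. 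Otherwise let $H$ be the $3$-core of $G$, the maximal subgraph with $\delta(H)\ge 3$. Because $\Delta(G)\le 4$, every vertex of $H$ has at most one neighbour outside $H$ and $G-V(H)$ is $2$-degenerate, so, working from $H$ outwards by the same greedy rule, any list (resp.\ single conflict) $3$-colouring of $H$ extends to one of $G$; thus the statement is reduced to $3$-colouring $H$ in both senses.

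This is where the hypothesis on $C_1,C_2$ is used. Choose on $C_1$ two non-adjacent vertices $a,a'$ and on $C_2$ two non-adjacent vertices $b,b'$ (possible because $C_1,C_2$ have length at least $4$), and run a Brooks-type argument: place $a,a',b,b'$ first, then list the remaining vertices of $G$ by decreasing distance from $\{a,a',b,b'\}$ in a suitably chosen connected subgraph, colouring $a,a'$ alike and $b,b'$ alike. The two enforced coincidences $\varphi(a)=\varphi(a')$ and $\varphi(b)=\varphi(b')$, together with $\Delta(H)\le 4$, should cut the number of binding constraints to at most two at the handful of vertices of $H$ that a plain breadth-first ordering would leave with three coloured neighbours, so that the greedy step goes through; the single conflict version needs the analogous bookkeeping, exploiting that a coloured neighbour is binding there only when its colour equals its conflict value. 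The main obstacle is exactly this step — arranging the ordering so that the two ``mergings'' along $C_1$ and $C_2$ really do reduce every vertex of $H$ to at most two binding constraints while keeping the auxiliary subgraph connected — and it requires a case analysis on how $C_1,C_2$ meet $H$ and on whether $G$ is $4$-regular. Once it is in place, the same ordering delivers $\ch(G)\le 3$ and $\single(G)\le 3$ simultaneously, and with the lower bound we conclude $\ch_{sep}(G)=\ch_{ad}(G)=\single(G)=\ch(G)=3$.
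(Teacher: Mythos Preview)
Your lower bound is correct and matches the paper. For the upper bound on $\single$, however, you are working much too hard: the paper simply invokes Proposition~\ref{prop:maxdeg}, which gives $\single(G) \leq \lceil \Delta(G)/2 \rceil + 1 = 3$ directly from $\Delta(G) \leq 4$, with no appeal to the cycles and no case analysis. Together with $\chsep \leq \chad \leq \single$ and the lower bound, this already pins those three invariants at~$3$.

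Your Brooks-type plan has a real gap even before the admittedly unfinished case analysis: the cycles $C_1, C_2$ lie in $G$, not necessarily in the $3$-core $H$, and if they contain degree-$2$ vertices of $G$ they are peeled off when $H$ is formed, so the argument on $H$ loses access to them. Concretely, take $K_5$ minus an edge and attach a pendant $4$-cycle by a single edge to each of the two degree-$3$ vertices: the result is connected with $\Delta = 4$ and two disjoint $4$-cycles, but its $3$-core is $K_5 - e$, which has only five vertices and hence no two disjoint $\geq 4$-cycles. The same example shows that the assertion $\ch(G) = 3$ in the statement cannot hold in general (the graph contains $K_4$, so $\ch(G) \geq \chi(G) \geq 4$); the paper's own argument, via Propositions~\ref{prop:disjointcycles} and~\ref{prop:maxdeg}, does not address $\ch$ either, so this appears to be an oversight in the statement rather than something your ordering scheme could have salvaged.
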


This proposition is a consequence of Proposition \ref{prop:disjointcycles} and 
the following upper bound, first stated explicitly for adaptable choosability in \cite{KostochkaZhu}.

\begin{proposition}
\label{prop:maxdeg}
	For any graph $G$, $\single(G) \leq \lceil \Delta(G)/2 \rceil +1$.
\end{proposition}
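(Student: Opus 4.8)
The plan is to reduce the existence of a conflict coloring to a fact about orientations of $G$. Set $k = \lceil \Delta(G)/2\rceil + 1$. The key observation is: \emph{if $D$ is an orientation of $G$ in which every vertex has out-degree at most $k-1$, then every local $k$-partition $\cv$ admits a conflict $\cv$-coloring.} To see this, for each vertex $v$ let $S_v = \{\, c_v(e) : e \text{ is an out-edge of } v \text{ in } D \,\}$. Since $v$ has at most $k-1$ out-edges, $|S_v| \le k-1 < k$, so we may choose $\varphi(v) \in \{1,\dots,k\}\setminus S_v$. Now let $e = xy$ be any edge, oriented say $x \to y$ in $D$. By construction $\varphi(x) \ne c_x(e)$, so the conflict pair $(c_x(e), c_y(e))$ cannot be realized, and hence $\varphi$ is a conflict $\cv$-coloring.

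It then remains to produce an orientation of $G$ with maximum out-degree at most $\lceil \Delta(G)/2\rceil = k-1$, which is a standard consequence of the existence of Eulerian orientations. First add a new vertex $z$ joined to every vertex of odd degree in $G$ (there is an even number of these), obtaining a graph $G'$ in which every vertex has even degree. Each component of $G'$ has an Eulerian circuit, and orienting its edges along such circuits gives an orientation with $d^+_{G'}(v) = d_{G'}(v)/2$ for every $v$. Deleting $z$ and restricting to $G$ yields an orientation $D$ of $G$ with $d^+_D(v) \le d^+_{G'}(v) = d_{G'}(v)/2 \le (d_G(v)+1)/2$; since $d^+_D(v)$ is an integer, $d^+_D(v) \le \lfloor (d_G(v)+1)/2\rfloor = \lceil d_G(v)/2\rceil \le \lceil \Delta(G)/2\rceil = k-1$, as needed.

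Combining the two steps yields the proposition, and the argument applies verbatim to multigraphs (parallel edges simply contribute repeated values to the sets $S_v$, which does not affect the bound $|S_v| \le d^+_D(v)$). The only point that requires genuine insight — and hence the crux of the proof — is the first step: recognizing that fixing an orientation lets us make the \emph{tail} endpoint of each edge unilaterally responsible for satisfying that edge's constraint, so that a palette of one more color than the maximum out-degree always suffices. Once this is identified, the required orientation is exactly the classical near-balanced (Eulerian) orientation, and no further work is needed.
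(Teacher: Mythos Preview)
Your proof is correct and follows the same approach as the paper: the paper deduces Proposition~\ref{prop:maxdeg} from the single-conflict version of Theorem~\ref{th:orient} (orientations of bounded out-degree yield conflict colorings) together with the classical fact that every graph has an orientation of maximum out-degree at most $\lceil \Delta(G)/2\rceil$. You have simply written out both ingredients explicitly, including the Eulerian-orientation construction, whereas the paper only cites them.
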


In turn, Proposition \ref{prop:maxdeg} follows from the fact that the following observation, already made in the
original paper by Kratochvil et al.\ \cite{KratochvilTuzaVoigt} on choosability with separation, 
also holds in the setting of single conflict coloring (and thus also for adaptable choosability),
as noted in \cite{DvorakEsperetKangOzeki}.

\begin{theorem} \cite{KratochvilTuzaVoigt}
\label{th:orient}
	If there is an orientation of a graph $G$ where every vertex has outdegree
	at most $k$, then $G$ is separation $(k+1)$-choosable.
\end{theorem}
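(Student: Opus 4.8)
The plan is to give a short greedy argument that uses the orientation to assign each edge to one of its endpoints. I would fix an orientation $D$ of $G$ with maximum outdegree at most $k$ and an arbitrary list assignment $L$ with $|L(v)| \ge k+1$ for all $v$ and $|L(u) \cap L(v)| \le 1$ for every edge $uv$. For each vertex $v$ I would define the forbidden set
\[
	B(v) \;=\; \bigcup_{w \in N^+(v)} \bigl( L(v) \cap L(w) \bigr),
\]
where $N^+(v)$ denotes the set of out-neighbors of $v$ in $D$. The separation hypothesis makes each $L(v) \cap L(w)$ a set of size at most one, so $B(v)$ is a union of at most $|N^+(v)| \le k$ singletons; hence $|B(v)| \le k < |L(v)|$.

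The next step is to color: for each vertex $v$ choose any color $\varphi(v) \in L(v) \setminus B(v)$, which is possible since that set is nonempty. I would then check that $\varphi$ is a proper $L$-coloring of $G$. Take any edge $xy$, oriented (say) $x \to y$ in $D$, so that $y \in N^+(x)$. If $\varphi(x) = \varphi(y)$ held, this common color would lie in $L(x) \cap L(y) \subseteq B(x)$, contradicting $\varphi(x) \notin B(x)$; hence $\varphi(x) \ne \varphi(y)$. Since every edge is oriented from exactly one endpoint, every edge of $G$ is then properly colored. As $L$ was arbitrary, this shows $G$ is separation $(k+1)$-choosable.

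I do not foresee a genuine obstacle: the only point to notice is that the separation condition is precisely what keeps $|B(v)| \le k$, leaving room to color $v$ safely while avoiding conflicts along its out-edges, and the orientation guarantees that every edge is examined from only one end. The same argument applies verbatim to single conflict coloring and thus gives the stronger bound $\single(G) \le k+1$: for a local $(k+1)$-partition $\{c_v\}$ one replaces $B(v)$ by the set of values that $c_v$ takes on the out-edges at $v$ (again of size at most $k$) and picks $\varphi(v)$ outside it, so that for an edge $e = xy$ oriented $x \to y$ one has $\varphi(x) \ne c_x(e)$ and the conflict pair $(c_x(e), c_y(e))$ is avoided. Via \eqref{eq:olika} this also recovers the corresponding bound for $\ch_{ad}$.
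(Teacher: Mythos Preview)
Your argument is correct; the greedy choice of $\varphi(v) \in L(v) \setminus B(v)$ works exactly as you describe, and your extension to single conflict coloring (and hence to $\ch_{ad}$ via \eqref{eq:olika}) is also sound. Note, however, that the paper does not supply its own proof of this theorem: it is quoted as a result of Kratochv\'{\i}l, Tuza and Voigt \cite{KratochvilTuzaVoigt}, and the paper merely remarks that the same orientation argument carries over to the single conflict setting as observed in \cite{DvorakEsperetKangOzeki}. Your write-up is precisely that standard argument, so there is nothing to contrast.
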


Clearly, the single conflict coloring version of Theorem \ref{th:orient} implies Proposition \ref{prop:maxdeg}, and
also that $k$-degenerate graphs have single conflict chromatic number at most $k+1$.
By contrast to Proposition \ref{prop:all3}, the following problem is open to the best of our knowledge.

\begin{problem}
\label{prob:alldiff}
	 Is there a graph $G$ that satisfies
	$\ch_{sep}(G) < \ch_{ad}(G) < \single(G)$?
\end{problem}
	
In Section 4, we shall answer this 
question in the negative for planar graphs.
Moreover, motivated by Problem \ref{prob:alldiff},
we investigate complete and complete bipartite graphs in the 
next section. 
	
\section{Complete and complete bipartite graphs}

Let us first consider complete graphs. Using the upper bound in
Proposition \ref{prop:maxdeg} and some rather straightforward
case analysis, we have obtained the results for 
complete graphs of small order in Table 1.
Some further details on how these equalities can be proved can be found in \cite{Eriksson}.
\begin{table}[H]
    \centering
        \begin{center}
        \begin{tabular}{|l|c|c|c|c|}
         \hline
         \rule{0pt}{2.5ex} $G$ & $\ch(G)$ & $\single(G)$ & $\ch_{ad}(G)$ & $\ch_{sep}(G)$ \\
         \hline
         \hline
         $K_1$& 1&1&1&1 \\ 
         $K_2$& 2&2&2&2\\
         $K_3$& 3&2&2&2\\
         $K_4$& 4&3&3&2\\
         $K_5$& 5&3&3&3\\
         \hline
        \end{tabular}
        \end{center}
    \caption{Calculated values of the invariants $\ch(K_n)$, $\single(K_n)$, $\ch_{ad}(K_n)$, and $\ch_{sep}(K_n)$ for the complete graphs $K_n$, $n=1,\dots, 5$.}
    \label{tab: Values of invariants for complete graphs}
\end{table}
	Let us also remark that in \cite{Eriksson}, it is shown that 
	$\ch_{sep}(K_n) =3$ for $n =6,7,8$, and we believe that
	the same holds for single conflict coloring and adaptable choosabilty. Thus, it appears unlikely that 
	Problem \ref{prob:alldiff} has a positive answer for complete graphs of small order.

	Let us now turn to complete bipartite graphs. We present 
	some results for $K_{1,n}, K_{2,n},K_{3,n}$, and $K_{4,n}$. 
	The proofs rely on the following proposition, which determines the threshold 
	values of $n$ at which the values of the invariants for $K_{k,n}$ increase from $k$ 
	to $k+1$.

\begin{proposition} 
\label{Lemma: Bipartite invariants k -> k+1}
\newcommand{\kkn}{K_{k,n}} 
For $n\leq k^k-1$, we have
$$
\ch(\kkn),\sing(\kkn),\chad(\kkn),\chsep(\kkn)\leq k,
$$
and for $n\geq k^k$, we have
$$
\ch(\kkn)=\sing(\kkn)=\chad(\kkn)=\chsep(\kkn)=k+1.
$$
\end{proposition}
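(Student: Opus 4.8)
The plan is to prove the two halves separately, in each case chaining the known inequalities $\chsep \le \chad \le \sing \le \ch$ (see \eqref{eq:olika} and the remarks following it) so that it suffices to bound $\ch(K_{k,n})$ from above in the first regime and $\chsep(K_{k,n})$ from below in the second. For the upper bound, suppose $n \le k^k - 1$ and let $A$ be the side of size $k$, $B$ the side of size $n$; assign arbitrary lists of size $k$. First color the vertices of $A$ greedily (they form an independent set, so any choice works), producing a vector $(\varphi(a))_{a \in A}$ of chosen colors, one from each of the $k$ lists on $A$. A vertex $b \in B$ becomes uncolorable only if its list of size $k$ is entirely ``blocked'', i.e.\ every color in $L(b)$ equals $\varphi(a)$ for some neighbor $a$; since $b$ is adjacent to all of $A$, this forces $L(b) \subseteq \{\varphi(a) : a \in A\}$. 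The number of colorings of $A$ that block a \emph{fixed} bad vertex $b$ is at most $k^{k-|L(b)|} \le k^{k-k}\cdot(\text{something}) $ — more carefully, one counts: to block $b$ we must have $\varphi(a)\in L(b)$ for enough $a$'s, but the cleanest bound is that the number of colorings of $A$ which fail to extend to $b$ is at most the number of colorings whose image avoids covering $L(b)$'s complement trivially, giving at most $k^{k}/k \cdot$ etc. The point I would actually push: the number of ``bad'' colorings of $A$ (those that kill at least one $b$) is strictly less than the total number $\prod_{a\in A}|L(a)| \ge k^k$ of colorings of $A$, provided $n \le k^k-1$, hence a good coloring exists. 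I expect the honest version of this counting to be the one subtle point — one must show each bad vertex $b$ rules out at most one coloring of $A$ (which happens exactly when $L(b)=\{\varphi(a):a\in A\}$ with all $\varphi(a)$ distinct, forcing the coloring of $A$ uniquely up to which list gives which color, i.e.\ at most $k!$, not $1$), so the correct threshold argument likely needs the sharper ``at most $k!$ bad colorings per $b$'' count together with $n\cdot k! < k^k$ being \emph{false} in general; instead the right statement is that a vertex $b$ with $|L(b)|=k$ is blocked by at most $k^{k} - (k-1)^{?}$ ... — in short, \textbf{the main obstacle is getting the combinatorial blocking count exactly right so that the threshold lands precisely at $k^k$.} The cleanest route is: a coloring of $A$ fails at $b$ iff the $k$-set $L(b)$ is disjoint-complement-covered, equivalently the map $a\mapsto\varphi(a)$ restricted to $L(b)$-preimages is surjective onto $L(b)$; counting surjections $[k]\to L(b)$ gives exactly $k!$ bad colorings when $|L(b)|=k$, and $\le k!$ in general, so a good coloring exists once $(\text{total colorings}) > n\cdot k!$, i.e.\ $k^k > n\cdot k!$ — and since we only need $n\le k^k-1$ with the extremal lists forced, one checks directly that the extremal configuration (all lists on $B$ equal to $[k]$, forcing exactly the $k!$ bijective colorings of $A$ to be bad) has exactly $k^k - k!$ good colorings, which is positive, and more refined list choices only help; I would present this as a short union-bound / inclusion argument.

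For the lower bound, suppose $n \ge k^k$; by \eqref{eq:olika} it suffices to exhibit a separation-$k$-list-assignment of $K_{k,n}$ with no proper coloring, and this forces $\chsep(K_{k,n}) \ge k+1$, whence all four invariants equal $k+1$ once we also note $\ch(K_{k,n}) \le k+1$ (which follows from the first part applied with $k+1$, since $n$ is finite, or directly: $K_{k,n}$ has a vertex of degree $k$ on one side). The construction: put $L(a) = \{1,\dots,k\}$ on every $a \in A$ (trivially pairwise intersection $\le 1$ is vacuous as $A$ is independent), and for $B$, take $n \ge k^k$ vertices and assign to them lists so that the $k$ colorings... — wait, each $L(b)$ must have size $k$ and meet each $L(a)=[k]$ in at most one color, which is impossible if $k\ge 2$ since $L(b)\subseteq[k]$ would then have $|L(b)\cap[k]|=k>1$. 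So the separation construction cannot reuse the same color set on $A$; instead I would give the adjacent vertices genuinely almost-disjoint lists. The standard trick (as in \cite{FurediKostochkaKumbhat}): use a large palette, give each $a_i\in A$ its own private block of colors plus one ``shared'' color structure, and arrange that the $n=k^k$ vertices of $B$ realize all $k^k$ ``blocking patterns'' — one vertex $b$ for each function $f:\{a_1,\dots,a_k\}\to[k]$, where $L(b)$ consists of the $k$ colors ``position $i$ of $a_i$ gets value $f(a_i)$'', chosen from disjoint palettes so the separation condition holds with room to spare. Then any coloring of $A$ corresponds to exactly one such $f$, and the vertex $b_f$ has all $k$ of its colors forbidden, so no proper coloring exists. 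I expect this direction to be more routine once the palette bookkeeping is set up, since it is essentially the classical extremal example for choosability with separation; the only care needed is to verify that distinct colors are used across different ``coordinates'' so that adjacent lists $L(a_i), L(b_f)$ share at most the single intended color. Finally I would remark that the two bounds together pin the threshold exactly at $n=k^k$, matching the statement, and note in passing the sanity checks $k=2$ ($K_{2,3}$ is not $2$-choosable, $K_{2,3}$ with $n=3=2^2-1$ still is — consistent) against Theorem~\ref{th:2choosable} and Table~1.
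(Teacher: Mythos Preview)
Your proposal has a genuine gap in the first half. You claim the chain $\chsep \le \chad \le \sing \le \ch$, but the last inequality is \emph{false} in general: as noted right after \eqref{eq:olika}, only $\ch \ge \chad \ge \chsep$ holds, and Proposition~\ref{prop:chsingle} shows $\sing$ and $\ch$ are incomparable. So proving $\ch(K_{k,n}) \le k$ alone does \emph{not} give $\sing(K_{k,n}) \le k$; you need a separate argument. The paper supplies one: given a local $k$-partition, each of the $k^k$ colorings of the $k$-side is forbidden by a vertex $y_i$ only if it matches the single $k$-tuple $(c_{x_1}(y_ix_1),\dots,c_{x_k}(y_ix_k))$, so $n \le k^k-1$ vertices cannot kill all $k^k$ colorings.

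Your union-bound attempt for $\ch$ is also off track. You correctly sense the problem---a single $y_i$ can block far more than one coloring of $A$ (e.g.\ if several $L(x_j)$ coincide), so no crude count lands at the threshold $k^k$. The paper's fix is a case split: if two of the lists $L(x_1),\dots,L(x_k)$ intersect, color $A$ using at most $k-1$ distinct colors and every $y_i$ survives; if they are pairwise disjoint, then knowing the set $\{a_1,\dots,a_k\}$ determines the tuple, so each $y_i$ blocks exactly one of the $k^k$ colorings and the pigeonhole finishes. This split is the missing idea.

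For the second half, your eventual construction (disjoint $k$-blocks on the $x_i$, all $k^k$ transversals on the $y_j$) is exactly the paper's, so that direction is fine once cleaned up. One more small slip: to get the upper bound $k+1$ when $n\ge k^k$ you cannot ``apply the first part with $k+1$'' since $n$ is unbounded; the paper uses that $K_{k,n}$ is $k$-degenerate, which bounds all four invariants (including $\sing$, via the single-conflict version of Theorem~\ref{th:orient}) by $k+1$.
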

\begin{proof}
\newcommand{\kkn}{K_{k,n}}
\newcommand{\K}{K_{k, k^k}}

    Fix $k\geq 2$ and let $V(\kkn)=\{x_1, \dots, x_k\}\cup\{y_1, \dots, y_n\}$. 

    For $n\leq k^k-1$, it suffices to prove that $\ch(\kkn)$ and $\sing(\kkn)$ are bounded by $k$. First, we show the upper bound for $\ch(\kkn)$. Let $L$ be any $k$-list assignment and consider the following two cases; either $L(x_i)\cap L(x_j)\neq \emptyset$ for some $i\neq j$, or $L(x_1), \dots, L(x_k)$ are pairwise disjoint.

    In the first case, we can assume $L(x_1)\cap L(x_2)\supseteq \{a_1\}$. Color the vertices $x_1$ and $x_2$ with $a_1$, and then $x_j$ with $a_{j-1}\in L(x_j)$, for $j=3,\dots, k$. We have now used at most $k-1$ colors on the part $\{x_1,\dots, x_k\}$, and since $|L(y_i)|>k-1$ for all $i\in [n]$, there is at least one color in $ L(y_i)\setminus \{a_1,\dots, a_{k-1}\}$ for coloring $y_i$, for $i\in [n]$.

    In the second case, there are $k^k$ ways of coloring the vertices $x_1,\dots, x_k$, and there are less than $k^k$ lists $L(y_i)$, $i\in [n]$. Thus, there must exist (at least) one choice of colors $a_1,\dots, a_k$ for $x_1,\dots, x_k$ such that $L(y_i)\neq \{a_1, \dots, a_k\}$, for all $i\in [n]$. 
Hence, there is an $L$-coloring of $K_{k,n}$

    Next, we show the upper bound $\sing(\kkn)\leq k$. Suppose that $\{c_v\}$ is a local $k$-partition of $\kkn$. There are $k^k$ ways of coloring $x_1,\dots, x_k$, each of which corresponds to a $k$-tuple $(a_1,\dots, a_k)$ of colors. There are less than $k^k$ vertices $y_i$, $i\in [n]$, each of which incident with the edges $y_ix_1, \dots, y_ix_k$, yielding a $k$-tuple of conflict colors $(c_{x_1}(y_ix_1), \dots, c_{x_k}(y_ix_k))$. As before, there must be at least one choice of colors $a_1, \dots, a_k$ for $x_1, \dots, x_k$ such that $(a_1,\dots, a_k)\neq ((c_{x_1}(y_ix_1), \dots, c_{x_k}(y_ix_k)))$ for all $i\in [n]$. Consequently, there is a single conflict coloring of $K_{k,n}$.

    For $n\geq k^k$, it suffices to prove that $\chsep(\K)>k$. Since $\kkn$ is $k$-degenerate, all four invariants are bounded above by $k+1$. We define a separated $k$-list assignment $L$ for $\K$ that does not admit an $L$-coloring as follows. Let $L(x_i)=\{c_{i,1},\dots, c_{i,k}\}$ for $i\in [k]$, where all $c_{i,j}$ are distinct colors. Let the lists $L(y_1),\dots, L(y_k)$ be all possible $k$-sets where exactly one element from each list $L(x_i)$ is contained in each $L(y_j)$, $j\in [k]$; there are $k^k$ such sets. Then, for each choice of colors for $x_1,\dots, x_k$ there is exactly one list $L(y_j)$, $j\in [k^k]$, containing exactly these colors. 
Thus there is no $L$-coloring of $\K$, so $\chsep(\K)>k$. 
\end{proof}

Let us first note the following trivial consequence of the preceding proposition and the fact that
$\chsep(K_{2,4})=3$.

\begin{itemize}

    \item[(i)] For $n\geq 1$, 
$\ch(K_{1,n})=\sing(K_{1,n})=\chad(K_{1,n})=\chsep(K_{1,n})=2,$

\item[(ii)] for $n\in \{2,3\}$,
    $\ch(K_{2,n})=\sing(K_{2,n})=\chad(K_{2,n})=\chsep(K_{2,n})=2,$
    and 

\item[(iii)] for $n\geq 4$, $\ch(K_{2,n})=\sing(K_{2,n})=\chad(K_{2,n})=\chsep(K_{2,n})=3.$

\end{itemize}

	The following corollaries
	can all be proved using Proposition \ref{Lemma: Bipartite invariants k -> k+1} and by some additional
	case analysis. The full proofs appear in \cite{Eriksson}, and we omit them here. 

\begin{corollary}
    For $3\leq n \leq 26$,
    $$
    \ch(K_{3,n})=\sing(K_{3,n})=\chad(K_{3,n})=\chsep(K_{3,n})=3,
    $$
    and for $n\geq 27$,
    $$
    \ch(K_{3,n})=\sing(K_{3,n})=\chad(K_{3,n})=\chsep(K_{3,n})=4.
    $$
\end{corollary}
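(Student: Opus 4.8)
The plan is to apply Proposition~\ref{Lemma: Bipartite invariants k -> k+1} with $k=3$, which gives $3^3 = 27$ as the threshold. For $n \geq 27$ the proposition directly yields all four invariants equal to $4$, so that half is immediate. For the lower range $3 \leq n \leq 26$, the proposition gives only the upper bound $\le 3$ for each invariant; since the chain $\ch_{sep}(G) \le \ch_{ad}(G) \le \single(G) \le \ch(G)$ holds (by \eqref{eq:olika} together with $\ch_{ad}(G) \le \ch(G)$), it suffices to prove the single lower bound $\ch_{sep}(K_{3,n}) \ge 3$ for $n \geq 3$, and then the whole string of equalities at value $3$ follows by squeezing.

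First I would dispose of the lower bound. For $\ch_{sep}(K_{3,n}) \ge 3$ it is enough to exhibit a separated $2$-list assignment on $K_{3,3} \subseteq K_{3,n}$ (padding the remaining $y_i$ with any admissible lists) that has no proper list coloring; monotonicity of $\ch_{sep}$ under subgraphs then finishes it. With the three vertices $x_1,x_2,x_3$ on one side and using $2$-lists, there are $2^3 = 8$ colorings of $(x_1,x_2,x_3)$, but here one must be slightly careful: with $2$-lists the lists $L(x_i)$ need not be disjoint, so the argument from the proposition's second case does not transplant verbatim. Instead I would choose $L(x_1)=\{1,2\}$, $L(x_2)=\{1,2\}$, $L(x_3)=\{1,2\}$ and then pick three of the eight triples to block; in fact a cleaner route is to note that $\ch_{sep}(K_{3,3}) = 3$ is already a documented small-case computation (cited to \cite{Eriksson}), or to invoke that $K_{3,3}$ contains two disjoint $4$-cycles, whence by Proposition~\ref{prop:disjointcycles} it is not separation $2$-choosable. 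That last observation is the slickest: $K_{3,3}$ has two vertex-disjoint $4$-cycles (e.g.\ $x_1 y_1 x_2 y_2$ and built from the leftover... actually one must check disjointness carefully since $K_{3,3}$ has only six vertices, so two vertex-disjoint $4$-cycles are impossible there), so I would instead use Corollary~\ref{cor:triangle} in contrapositive or simply cite the explicit verification in \cite{Eriksson}.

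Given that subtlety, the honest plan for $3 \le n \le 26$ is: (a) cite or verify $\ch_{sep}(K_{3,3}) \ge 3$ directly (a four-line case check: any $2$-list assignment to $K_{3,3}$ fails for a suitable choice, or reuse that $\chi_{sep}(K_{2,4}) = 3$ together with the fact noted before this corollary that $\ch_{sep}(K_{2,4})=3$ and $K_{2,4}$ embeds when... no — that embedding does not hold either); cleanest is to note $K_{3,3} \supseteq K_{2,4}$? That also fails. So I would simply cite \cite{Eriksson} for $\ch_{sep}(K_{3,3})=3$, consistent with how the surrounding corollaries are handled, or give the short direct argument. Then (b) from $\ch_{sep}(K_{3,n}) \ge \ch_{sep}(K_{3,3}) \ge 3$ for all $n \ge 3$, combine with the upper bound $\le 3$ from Proposition~\ref{Lemma: Bipartite invariants k -> k+1} (valid since $26 = 3^3 - 1$) to get all four invariants equal to $3$ in that range. (c) For $n \ge 27$ apply the second clause of Proposition~\ref{Lemma: Bipartite invariants k -> k+1} verbatim.

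The main obstacle, such as it is, is purely the lower bound $\ch_{sep}(K_{3,n}) \ge 3$ for small $n$: Proposition~\ref{Lemma: Bipartite invariants k -> k+1} is stated so that its nontrivial lower-bound half only kicks in at $n = k^k$, so one genuinely needs an independent argument (or citation) that $K_{3,n}$ is not separation $2$-choosable already at $n=3$. Everything else is bookkeeping with the inequality chain $\ch_{sep} \le \ch_{ad} \le \single \le \ch$ and the two ranges of the proposition. I expect the write-up to be three or four sentences: invoke the proposition for the upper bounds and the $n \ge 27$ case, cite \cite{Eriksson} (as the surrounding corollaries do) for $\ch_{sep}(K_{3,3}) = 3$, and squeeze.
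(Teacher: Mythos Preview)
Your plan is the same as the paper's: the paper states that this corollary (and the two that follow it) ``can all be proved using Proposition~\ref{Lemma: Bipartite invariants k -> k+1} and by some additional case analysis,'' with full details deferred to \cite{Eriksson}. Your reduction to the single missing ingredient $\chsep(K_{3,3})\ge 3$, and your eventual decision to cite \cite{Eriksson} for it after correctly observing that Proposition~\ref{prop:disjointcycles} does \emph{not} apply to $K_{3,3}$ (any two $4$-cycles there share at least two vertices), matches exactly how the paper handles it.

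One correction worth making: the chain you write, $\chsep(G)\le\chad(G)\le\single(G)\le\ch(G)$, is not valid in general---the last inequality fails, as Proposition~\ref{prop:chsingle} in this very paper records (there are graphs with $\single(G)-\ch(G)$ arbitrarily large). Your justification ``\eqref{eq:olika} together with $\chad(G)\le\ch(G)$'' yields only $\chsep\le\chad\le\min\{\single,\ch\}$. This does not damage the squeezing, because Proposition~\ref{Lemma: Bipartite invariants k -> k+1} separately gives $\ch(K_{3,n})\le 3$ and $\single(K_{3,n})\le 3$ for $n\le 26$; just state it that way rather than via the false chain.
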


\begin{corollary}
\label{Proposition: Invariants for K_4,n}
    For $4\leq n\leq 20$,
    $$
    \ch(K_{4,n})=\chad(K_{4,n})=\chsep(K_{4,n})=3,
    $$
    for $21 \leq n\leq 255$,
    $$
    \ch(K_{4,n})=\sing(K_{4,n})=\chad(K_{4,n})=\chsep(K_{4,n})=4, 
    $$
    and for $n\geq 256$,
    $$
    \ch(K_{4,n})=\sing(K_{4,n})=\chad(K_{4,n})=\chsep(K_{4,n})=5.
    $$
\end{corollary}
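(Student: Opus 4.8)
The plan is to reduce the statement, using Proposition~\ref{Lemma: Bipartite invariants k -> k+1} and the inequalities in \eqref{eq:olika}, to two self-contained facts about covering transversals of four $3$-element sets, and then to prove those by a covering/counting argument.

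First the routine reductions. Since $256 = 4^4$, the range $n \ge 256$ is exactly Proposition~\ref{Lemma: Bipartite invariants k -> k+1} with $k = 4$, giving $\ch(K_{4,n}) = \single(K_{4,n}) = \chad(K_{4,n}) = \chsep(K_{4,n}) = 5$; and for $n \le 255$ the same proposition gives all four invariants at most $4$. For the lower bound $3$ in the two remaining ranges I would use that $K_{2,4}$ is a subgraph of $K_{4,n}$ when $n \ge 4$, that $\chsep(K_{2,4}) = 3$, and that $\chsep$ (like each of the other three invariants) is monotone under taking subgraphs; with \eqref{eq:olika} this yields $\chsep, \chad, \ch \ge 3$. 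Using $\chsep \le \chad \le \ch$, $\chsep \le \single$, and monotonicity of each invariant in $n$, everything then comes down to: (A) $\ch(K_{4,20}) \le 3$, and (B) $\chsep(K_{4,21}) \ge 4$. Indeed, granting these, for $21 \le n \le 255$ one has $4 \le \chsep(K_{4,21}) \le \chsep(K_{4,n}) \le \chad(K_{4,n}) \le \ch(K_{4,n}) \le 4$ and $4 \le \chsep(K_{4,n}) \le \single(K_{4,n}) \le 4$, forcing all four to equal $4$; and for $4 \le n \le 20$ one has $3 \le \chsep(K_{4,n}) \le \chad(K_{4,n}) \le \ch(K_{4,n}) \le \ch(K_{4,20}) \le 3$.

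Both (A) and (B) are statements about covering transversals. Writing $V(K_{4,n}) = \{x_1,\dots,x_4\}\cup\{y_1,\dots,y_n\}$, a $3$-list assignment $L$ restricts to a tuple $(A_1,\dots,A_4)$ of $3$-lists on the $x_i$, and a colouring $(a_1,\dots,a_4) \in A_1\times\cdots\times A_4$ extends to $y_j$ unless $L(y_j) \subseteq \{a_1,\dots,a_4\}$; say a transversal $(a_1,\dots,a_4)$ is \emph{hit} by a $3$-set $B$ if $B \subseteq \{a_1,\dots,a_4\}$. Then $K_{4,n}$ is $L$-colourable iff some transversal is hit by no $L(y_j)$, so $\ch(K_{4,n}) \le 3$ holds iff for every $(A_1,\dots,A_4)$ no $n$ three-sets hit all transversals. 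For (B) I would exhibit one explicit configuration $(A_1,\dots,A_4)$ whose lists overlap so that no two-element set meets all four of them (equivalently every transversal has support of size at least $3$, which is necessary for it to be hit at all), together with $21$ three-sets $B_1,\dots,B_{21}$ each obeying the separation condition $|B_j\cap A_i|\le 1$ that hit all $81$ transversals; a separated $3$-set hits at most $5$ transversals (the best case being when one of its colours lies in two of the $A_i$), so at least $\lceil 81/5\rceil = 17$ are needed, and the construction exploits the overlaps to bring the total down to $21$. For (A) the plan is dual: the condition that no two-element set meet all four $A_i$ (else some transversal has support of size $\le 2$ and can never be hit) forces the intersection graph of $(A_1,\dots,A_4)$ to be matching-free with no colour lying in three of the lists --- essentially a star or a triangle, plus degenerate subcases --- and one then bounds the covering number below in each case and checks it is always at least $21$.

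The main obstacle is (A). The subtle point is that without the separation restriction a single $3$-set can hit as many as nine transversals --- for instance $B = A_1$ when $A_1$ meets each of $A_2, A_3, A_4$ --- so a naive volume bound yields only a covering number $\ge \lceil 81/9\rceil = 9$, far below $21$; the real argument must show that such efficient sets only help on a restricted family of transversals and that in every configuration the remaining transversals push the total up to at least $21$. This is precisely the case analysis the paper defers to \cite{Eriksson}. Claim (B) is comparatively routine once the right overlapping configuration of the four lists is guessed. Finally, note that $\single$ is deliberately absent from the first range: the same type of count shows $\single(K_{4,n}) = 3$ for $n \le 16$ (each vertex of the large side kills at most five colourings of the $x_i$ under a local $3$-partition), but one does not expect this to persist for all $n \le 20$, which is why $\single$ is omitted there.
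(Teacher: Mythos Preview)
Your reduction is correct and is exactly the approach the paper indicates: the paper likewise invokes Proposition~\ref{Lemma: Bipartite invariants k -> k+1} for the range $n\ge 256$ and the global upper bound $4$ for $n\le 255$, and then defers the remaining case analysis---your claims (A) $\ch(K_{4,20})\le 3$ and (B) $\chsep(K_{4,21})\ge 4$---to \cite{Eriksson} without carrying them out in the text. So your outline is at least as detailed as what the paper provides, and both leave the substantive work to the thesis.

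One small bonus: your closing aside on $\single$ is actually sharper than the paper's own remark. The count you give (each $y_j$ kills at most five of the $81$ colourings of the $x_i$ under a local $3$-partition, the maximum occurring when the partition of the four edges by $c_{y_j}$ has type $(2,1,1)$) yields $\single(K_{4,n})=3$ for all $n\le 16$, whereas the paper, just after Corollary~\ref{Proposition: Invariants for K_4,n}, only records $\single(K_{4,4})\le 3$ via Proposition~\ref{prop:maxdeg} and declares the range $5\le n\le 18$ open.
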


\begin{corollary}
    For $19\leq n\leq 255,$
    $$
    \sing(K_{4,n})=4.
    $$
\end{corollary}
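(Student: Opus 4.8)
The plan is to obtain $\sing(K_{4,n})\le 4$ directly from Proposition~\ref{Lemma: Bipartite invariants k -> k+1}, and to establish $\sing(K_{4,n})\ge 4$ by exhibiting, in the threshold case $n=19$, a local $3$-partition of $K_{4,19}$ that admits no conflict coloring.

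Since $19\le 255=4^4-1$, Proposition~\ref{Lemma: Bipartite invariants k -> k+1} gives $\sing(K_{4,n})\le 4$ for all $n$ in the stated range. For the lower bound it suffices to treat $n=19$: given a local $3$-partition $\{c_v\}$ of $K_{4,19}$ with no conflict coloring, extend it to $K_{4,n}$ for $n>19$ by assigning arbitrary conflict values on the new edges; since $K_{4,19}$ is an induced subgraph of $K_{4,n}$ whose incident conflict maps are unchanged, any conflict coloring of $K_{4,n}$ would restrict to one of $K_{4,19}$, which is impossible. Hence $\sing(K_{4,n})\ge 4$ for every $n\ge 19$.

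To construct the bad partition on $K_{4,19}$, write $V(K_{4,19})=\{x_1,\dots,x_4\}\cup\{y_1,\dots,y_{19}\}$. A conflict coloring must first color $\{x_1,\dots,x_4\}$ by a tuple $(a_1,\dots,a_4)\in\{1,2,3\}^4$ (there are $3^4=81$ of them) and then extend to every $y_i$; the extension to $y_i$ exists precisely when $\{\,c_{y_i}(y_ix_j):1\le j\le 4,\ a_j=c_{x_j}(y_ix_j)\,\}$ is a proper subset of $\{1,2,3\}$. Thus $\{c_v\}$ has no conflict coloring if and only if the sets
$$K_i=\bigl\{(a_1,\dots,a_4)\in\{1,2,3\}^4 : \{\,c_{y_i}(y_ix_j):a_j=c_{x_j}(y_ix_j)\,\}=\{1,2,3\}\bigr\}$$
for $i=1,\dots,19$ cover $\{1,2,3\}^4$. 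A short case analysis shows each $K_i$ is empty or has exactly $5$ elements, the latter occurring exactly when the tuple $(c_{y_i}(y_ix_1),\dots,c_{y_i}(y_ix_4))$ uses all three colors (so has color-multiplicities $2,1,1$); if the repeated color appears on the edges to $x_{j_1}$ and $x_{j_2}$, then $K_i$ is the set of tuples with $a_{j_3}$ and $a_{j_4}$ equal to prescribed values and with $a_{j_1}$ or $a_{j_2}$ equal to a prescribed value — a ``generalized $L$-set'', obtained by fixing two coordinates and taking the union of a row and a column in the other two. Hence it suffices to cover $\{1,2,3\}^4$ with $19$ generalized $L$-sets; from such a cover one reads off the maps $c_{x_j}(y_ix_j)$ and $c_{y_i}(y_ix_j)$ (choosing the latter with multiplicity pattern $2,1,1$) that define $\{c_v\}$. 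I would present the $19$ generalized $L$-sets explicitly, grouped according to which coordinate pair they fix, and check in finitely many (computer-assisted) steps that their union is all of $\{1,2,3\}^4$.

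The crux, and the main obstacle, is producing this $19$-set cover. Each generalized $L$-set has only $5$ elements, so at least $\lceil 81/5\rceil=17$ of them are needed, and since $19\cdot 5=95=81+14$ the cover is fairly tight — at most $14$ points may be covered more than once. Moreover the obvious ``product-type'' constructions (covering each $3\times 3$ fiber over the last two coordinates by $L$-sets that fix the first two, or dually covering one cell across an $L$-shaped family of fibers) systematically waste coverage and only reach about $20$ to $27$ sets; reaching $19$ requires mixing $L$-sets that fix different coordinate pairs, and is most conveniently found by a short computer search, the verification then being routine. One can additionally verify that no $18$ generalized $L$-sets cover $\{1,2,3\}^4$, which gives $\sing(K_{4,18})=3$ and shows $19$ is sharp; that finite but more delicate check is not needed for the statement as given.
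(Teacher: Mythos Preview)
Your reduction is sound: the upper bound is immediate from Proposition~\ref{Lemma: Bipartite invariants k -> k+1}, monotonicity in $n$ correctly reduces the lower bound to $n=19$, and your analysis of the ``killed'' sets $K_i$ as $5$-element generalized $L$-sets in $\{1,2,3\}^4$ is accurate. The paper itself gives no proof here, deferring the details to the thesis \cite{Eriksson}, so there is no explicit argument to compare against; the intended route is presumably the same, namely to exhibit a concrete local $3$-partition of $K_{4,19}$.

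That said, there is a genuine gap at precisely the point you label ``the crux'': you never actually produce the $19$ generalized $L$-sets, only announce that you \emph{would} present them and that a computer search \emph{would} find them. Since you yourself observe that the natural product-type constructions overshoot and that the cover is tight (only $14$ points of slack over $81$), the existence of such a cover is the entire content of the lower bound and cannot be left as a promise. As written, you have shown only that $\sing(K_{4,19})\ge 4$ is \emph{equivalent} to the existence of a $19$-set cover of $\{1,2,3\}^4$ by generalized $L$-sets, not that one exists. To complete the proof you must either list the $19$ sets explicitly (together with a verification, by hand or machine, that their union is all of $\{1,2,3\}^4$) or supply a structural argument that guarantees such a cover; without that, the lower bound is unproved.
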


As a final remark, by Proposition \ref{prop:maxdeg},
$$
\sing(K_{4,4})\leq \left \lceil\frac{4}{2}\right \rceil +1=3,
$$
so the unknown single conflict chromatic numbers for $K_{4,n}$ are 
those where $5\leq n\leq 18$.


\section{Planar graphs}

In this section, we turn our attention to the questions of which values the invariants $\chsep(G)$, $\chad(G)$, and $\sing(G)$ can attain for a planar graph $G$.
Much research has been directed towards separation and adaptable choosability of
planar graphs \cite{HellZhu,CasselgrenGranholmRaspaud,ChenLihWang,
EsperetMontassierZhu,Choi}. 
Using Theorem \ref{th:orient}, Kratochvil et al.\ \cite{KratochvilTuzaVoigt} prove that planar graphs
have separation choosability at most $4$, and triangle-free planar graphs have
separation choosability at most $3$. Since this theorem translates
both to the setting of adaptable choosability as well as single conflict coloring,
the same upper bounds hold for these invariants, as noted in 
\cite{DvorakEsperetKangOzeki}.

It has been conjectured by Skrekovski that all planar graphs are separation $3$-choosable
\cite{Skrekovski}, while there are planar graphs with adaptable choosability $4$
\cite{HellZhu}. 
In \cite{DvorakEsperetKangOzeki}, the authors remark that if Skrekovski's conjecture is true, 
then this would imply that separation choosability and adaptable choosability can be
distinct for some planar graphs. In fact,
Montassier et al.\ \cite{EsperetMontassierZhu} gave examples
of $4$-cycle-free planar graphs with adaptable choosability $4$, 
while such graphs
have been proved to be separation $3$-choosable by Choi et al.\ 
\cite{Choi}.
Guan and Zhu \cite{GuanZhu} proved that a planar graph 
is adaptably 3-choosable (and hence separation $3$-choosable) 
if any two triangles have distance at least 2 and no triangle is adjacent to a 4-cycle,
where two cycles are {\em adjacent} if they share a common
edge; they are {\em intersecting} if they share a common vertex.

Generalizing the result of Choi et al.\ \cite{Choi} and also the result by 
Guan and Zhu \cite{GuanZhu},
the following was proved by the first author along with two 
co-authors \cite{CasselgrenGranholmRaspaud}.

\begin{theorem}
\label{prop:cassel}  \cite{CasselgrenGranholmRaspaud}
	A planar graph is adaptably $3$-choosable (and hence separation $3$-choosable)
	if it satisfies any of the following conditions
\begin{itemize}

	\item[(i)] no two triangles are intersecting, and every triangle is adjacent to at most
	one $4$-cycle;

	\item[(ii)] no triangle is adjacent to a triangle or a $4$-cycle, and every $5$-cycle
	is adjacent to at most three triangles.

\end{itemize}

\end{theorem}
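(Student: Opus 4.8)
This is established in \cite{CasselgrenGranholmRaspaud}; here is the approach one would take. The plan is to argue by contradiction with a discharging argument, handling (i) and (ii) in parallel. Suppose $G$ is a planar graph satisfying (i) (respectively (ii)) that is not adaptably $3$-choosable and is vertex-minimal with this property, and let $f$ be an edge coloring of $G$ and $L$ a $3$-list assignment for which $G$ admits no $L$-coloring adapted to $f$. First I would record the routine consequences of minimality: $G$ is connected and $\delta(G) \ge 3$, since in an adapted coloring each edge $uv$ forbids at most the single color $f(uv)$ at $v$ (and only if the other endpoint already received that color), so a vertex of degree at most $2$ can always be added back to an adapted coloring of $G$ minus that vertex. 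The same type of extension argument, applied to larger pieces, is then used to exclude a list of reducible configurations --- small subgraphs, typically built from vertices of degree $3$ together with incident triangles and short faces, that cannot occur in $G$. When one deletes or contracts part of such a configuration, a point requiring attention is that the resulting smaller graph must not contain a forbidden pattern --- two intersecting triangles under (i), or a triangle adjacent to a triangle or a $4$-cycle under (ii) --- but the sparseness hypotheses make this controllable.

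The second step is the discharging itself. Assign to each vertex $v$ the charge $\deg(v) - 4$ and to each face $\phi$ the charge $\deg(\phi) - 4$; by Euler's formula the charges total $-8$. The only negative charges are those of the $3$-vertices and the $3$-faces, each equal to $-1$. I would then design discharging rules in which faces of length at least $5$, faces of length $4$, and vertices of degree at least $5$ send charge to nearby $3$-faces and $3$-vertices, and verify using the reducible configurations that after redistribution every vertex and every face is left with nonnegative charge, contradicting the total $-8$. The rules must differ for (i) and (ii): under (i) a triangle may be adjacent to one $4$-cycle and triangles are pairwise vertex-disjoint, whereas under (ii) triangles are edge-disjoint from $4$-cycles but a $5$-cycle may be adjacent to up to three triangles, so the set of donors and the amounts shipped are not the same in the two cases.

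I expect the main obstacle to be the discharging bookkeeping rather than the color-extension verifications, which are routine once the right configurations are in hand. Concretely, one must check that no face or vertex acting as a donor is overdrawn by the several triangles for which it is responsible, and this is exactly where the quantitative clauses --- ``at most one $4$-cycle'' in (i) and ``at most three triangles'' in (ii) --- enter; making these local counts balance, in particular around a triangle adjacent to a $4$-cycle under (i) and around a $5$-face carrying several triangles under (ii), is the delicate part of the argument, and it is what dictates which reducible configurations must be established in the first step.
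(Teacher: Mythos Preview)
Your discharging-with-reducible-configurations outline is a reasonable and standard strategy for results of this type, but it is not the route the paper points to. The paper does not prove Theorem~\ref{prop:cassel} at all; it cites \cite{CasselgrenGranholmRaspaud} and, crucially, remarks that the proof there ``is based on the adaptable choosability variant of Theorem~\ref{th:orient}.'' In other words, the argument in the cited paper proceeds by showing that a planar graph satisfying (i) or (ii) admits an orientation with maximum outdegree at most~$2$, and then invokes the orientation criterion (Theorem~\ref{th:orient}) to conclude adaptable $3$-choosability. Any discharging that occurs is in service of producing such an orientation (equivalently, showing that every subgraph $H$ satisfies $|E(H)|\le 2|V(H)|$), not in service of directly extending partial adapted colorings through reducible configurations.

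This distinction matters for the paper's purposes. The very next statement, Corollary~\ref{cor:planarsingle}, upgrades Theorem~\ref{prop:cassel} from adaptable $3$-choosability to single conflict $3$-colorability, and the justification is precisely that Theorem~\ref{th:orient} also holds in the single conflict setting: once you have the outdegree-$2$ orientation, you get all three invariants bounded by~$3$ for free. Your direct approach, by contrast, bakes the adapted-coloring extension arguments into each reducibility lemma, so one would have to re-examine each of those lemmas to check they go through for arbitrary local partitions rather than genuine edge colorings. That is doable but is extra work, and it is exactly what the orientation route sidesteps. So your sketch is plausible as an alternative proof of the theorem as stated, but it loses the structural information (the orientation) that the paper relies on downstream.
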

The proof of this proposition is based on the adaptable choosability variant
of Theorem \ref{th:orient}, so the same 
proof in fact yields large families of planar graphs with single conflict
chromatic number $3$.

\begin{corollary}
\label{cor:planarsingle}
	A planar graph satisfying (i) or (ii) of Theorem \ref{prop:cassel} is single conflict $3$-colorable.
\end{corollary}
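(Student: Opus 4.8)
The plan is to observe that Corollary~\ref{cor:planarsingle} follows by running the \emph{same} orientation-based argument used to establish Theorem~\ref{prop:cassel}, but replacing each appeal to the adaptable-choosability version of Theorem~\ref{th:orient} with its single-conflict analogue. Concretely, recall (as noted after Theorem~\ref{th:orient}) that if $G$ admits an orientation in which every vertex has outdegree at most $k$, then $G$ is single conflict $(k+1)$-colorable. Thus the only input needed from \cite{CasselgrenGranholmRaspaud} is that every planar graph satisfying (i) or (ii) admits an orientation with maximum outdegree at most $2$; this is precisely what the discharging/structural analysis in the proof of Theorem~\ref{prop:cassel} produces, since that proof never uses any property of lists beyond the outdegree bound.

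First I would state the single-conflict form of Theorem~\ref{th:orient}: given a local $3$-partition $\{c_v\}$ of $G$ and an orientation of $G$ with all outdegrees at most $2$, process the vertices in any order that is a reverse topological-like order is not even needed---instead, for each vertex $v$, the at most $2$ out-neighbours already coloured (or, more simply, use the standard greedy argument on the acyclic-free structure) each forbid at most one colour from $c_v$ via the conflict pair on the corresponding edge, leaving at least $3-2=1$ admissible colour. I would spell this out carefully: orient $G$ with outdegree $\le 2$, take any linear order extending the partial order where $u$ precedes $v$ whenever the arc is $v\to u$ is not acyclic in general, so instead I would use the cleaner phrasing: colour greedily along a vertex ordering, and when colouring $v$ the constraints come only from its out-arcs, of which there are at most two, each eliminating one colour. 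Hence a valid colour remains, and $G$ is conflict $\{c_v\}$-colorable for every local $3$-partition.

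Second, I would invoke the structural conclusion of the proof of Theorem~\ref{prop:cassel}: under hypothesis (i) or (ii), $G$ has an orientation with maximum outdegree at most $2$. Since the cited proof establishes exactly this orientation before converting it into a colouring, no new combinatorial work is required; I would simply quote it. Combining the two steps gives that every planar graph satisfying (i) or (ii) is single conflict $3$-colorable, which is the statement of Corollary~\ref{cor:planarsingle}.

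The only genuinely delicate point---and the one I would flag as the main obstacle---is verifying that the argument in \cite{CasselgrenGranholmRaspaud} truly factors through an outdegree-$\le 2$ orientation rather than through a more refined list-dependent orientation scheme (for instance one where the bound on outdegree is relaxed at certain vertices in exchange for list structure). If the original proof uses such a refinement, one must check that each relaxation step---typically a local recolouring or a "small list / large outdegree" exchange argument---carries over verbatim to conflict pairs, which it does because a conflict pair behaves exactly like a single forbidden colour per edge, just as a single common colour does in the separation setting. So the write-up reduces to asserting this transfer and pointing to the relevant portion of \cite{CasselgrenGranholmRaspaud}; I would keep the proof short, stating that the argument is identical to that of Theorem~\ref{prop:cassel} once Theorem~\ref{th:orient} is read in its single-conflict form.
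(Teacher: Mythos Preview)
Your approach is correct and essentially identical to the paper's: the paper simply observes (in the sentence preceding the corollary) that the proof of Theorem~\ref{prop:cassel} in \cite{CasselgrenGranholmRaspaud} is based on the orientation criterion of Theorem~\ref{th:orient}, and since that criterion holds verbatim for single conflict coloring, the same orientation with maximum outdegree~$2$ yields the corollary. Your middle paragraph is a bit tangled---no vertex ordering is needed at all, since each vertex $v$ just avoids the at most two colors $c_v(e)$ over its out-arcs $e$, independently of how other vertices are colored---but the overall plan and the caveat you flag in the final paragraph match the paper exactly.
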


As far as we know, this yields the only known nontrivial 
family of planar graphs with triangles and single conflict
chromatic number $3$. It would be interesting to investigate if Theorem 
\ref{prop:cassel} could be generalized further. We remark
that some further families of planar graphs with single conflict
chromatic number $3$ appear in \cite{CasselgrenGranholmRaspaud}.

More generally, one might ask which planar graphs $G$ satisfy 
$\single(G) = 3$ and if there are planar graphs with adaptable choosability $3$
and single conflict chromatic number $4$. We have been unable to find such examples,
but allowing multiple edges implies that there indeed is such graphs.

\begin{theorem} 
\label{Proposition: Multigraph planar (3,3,4)}
    There is a planar multigraph $G$ with 
    $$
    (\chsep(G), \chad(G), \sing(G))=(3,3,4).
    $$
\end{theorem}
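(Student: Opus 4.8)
The plan is to construct an explicit planar multigraph, keeping it as small as possible, built around a backbone where edge multiplicity two is used to force conflicts that a simple-graph list assignment could not. First I would recall the lower-bound mechanism: the single conflict chromatic number of a graph with an odd cycle backbone is at least $3$ already, and to push $\single(G)$ up to $4$ I need to exhibit a local $3$-partition $\{c_v\}$ for which no $\varphi:V(G)\to\{1,2,3\}$ avoids all conflict pairs. The natural target is a small ``gadget'' planar multigraph in which every vertex has degree $\le 4$ (so that Proposition \ref{prop:maxdeg} does not already cap $\single$ at $3$, but rather one needs degree at least $4$ somewhere; in fact with $\Delta = 4$ the bound gives $\single \le 3$, so the graph must contain a vertex of degree $\ge 5$, or exploit multiplicity differently). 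Concretely I would take a central vertex $u$ joined by double edges to several outer vertices arranged on a cycle, and choose the conflict maps $c_u$ on the two parallel edges $u v_i$ to take the two distinct values needed so that whichever color $\varphi(u)$ receives, one of the parallel edges $uv_i$ forbids $\varphi(v_i)$ from one specific value, while the cycle edges among the $v_i$ force the remaining two-coloring to fail.

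The key steps, in order, are: (1) fix a small planar multigraph $G$ — say a wheel-like configuration $W$ with hub $u$, rim a cycle $v_1\dots v_m$, and each spoke $uv_i$ doubled — and verify planarity explicitly (a multigraph with doubled spokes on a wheel is clearly planar: draw the two copies of $uv_i$ as two arcs bounding a bigon face). (2) Define a local $3$-partition $\{c_v\}$: on the doubled spoke $uv_i$ set $c_u$ to two different colors on the two copies, so that once $\varphi(u)=a$ is chosen, one copy of $uv_i$ is ``active'' and imposes $\varphi(v_i)\ne c_{v_i}(\cdot)$ for a prescribed color; arrange the $c_{v_i}$ on spokes and on rim edges so that this reduces, for every choice of $a$, to asking for a proper-type $2$-coloring of an odd cycle, which is impossible. (3) Argue carefully that for each of the three values of $\varphi(u)$ the induced constraints on the rim have no solution, which is the combinatorial heart. (4) Finally, establish the upper bounds $\chad(G)\le 3$ and $\chsep(G)\le 3$: since $\chsep(G)\le\chad(G)$ it suffices to bound $\chad$, and here I would check that $G$ is $2$-degenerate after noting the doubled spokes do not increase degeneracy for the adaptable/separation variants in the relevant way — more carefully, I would invoke Theorem \ref{th:orient} in its adaptable form by exhibiting an orientation of $G$ with outdegree $\le 2$ (possible if $|E(G)| \le 2|V(G)|$, counting multiplicities, and no dense subgraph obstructs it), which gives $\chad(G)\le 3$; combined with the trivial $\chsep(G)\ge 3$ (two disjoint cycles of length $\ge 4$, or an odd structure) and $\chad(G)\ge\chsep(G)\ge 3$, we get equality.

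The main obstacle I anticipate is step (3): making the conflict maps on the rim and spokes interlock so that the three ``residual'' constraint systems (one per color of $u$) are simultaneously unsatisfiable, while keeping $\Delta(G)$ and $|V(G)|$ small and the drawing planar. Doubled edges give exactly the extra freedom needed — a single edge $xy$ in a $3$-partition forbids only one pair $(\varphi(x),\varphi(y))$, whereas a doubled edge forbids two pairs, effectively letting one vertex ``veto'' two of the three colors of its neighbor depending on its own color — so the construction should morally mimic the fact that $K_4$ is not adaptably $2$-choosable but has $\single = 3$, scaled up one level. I would search for the configuration by hand starting from the smallest candidates (hub plus a $5$-cycle rim with doubled spokes, total $6$ vertices), and if that fails, add one rim vertex or one more doubled edge until the counting in step (4) still permits the outdegree-$2$ orientation. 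A subtle secondary point is confirming $\chad(G) \ne 4$: I must make sure the very gadget that forces $\single = 4$ does not also force $\chad = 4$, which is exactly why the upper-bound argument via an explicit orientation (rather than a generic bound) is essential, and why multiplicity two — not three — is the right choice.
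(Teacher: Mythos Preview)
Your plan has a fatal inconsistency in step (4). You propose to bound $\chad(G)\le 3$ by exhibiting an orientation with maximum outdegree $2$ and invoking Theorem~\ref{th:orient} ``in its adaptable form.'' But as noted immediately after that theorem (and in \cite{DvorakEsperetKangOzeki}), the orientation bound holds not just for $\chad$ but for $\single$ as well: an outdegree-$\le 2$ orientation would force $\single(G)\le 3$, directly contradicting the goal of steps (2)--(3). No graph can simultaneously admit such an orientation and satisfy $\single=4$; the tool you chose for the upper bound is precisely one that \emph{cannot} separate $\chad$ from $\single$. The paper sidesteps this by bounding $\chad(G)\le\ch(G)$ and proving $\ch(G)\le 3$ via a direct greedy $L$-coloring --- this works because ordinary choosability does not control $\single$ (Proposition~\ref{prop:chsingle}).

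The paper's construction also differs from your doubled-spoke wheel. It takes a four-vertex gadget $H$ (hub $u$ joined by \emph{single} edges to $x,y,z$, with \emph{doubled} edges $xy$ and $yz$), designs the local partition so that whenever $\varphi(u)=a$ the vertex $y$ has no legal color regardless of $\varphi(x),\varphi(z)\in\{b,c\}$, and then glues three copies of $H$ at the hub with the three cyclic choices of $(a,b,c)$ so that every value of $\varphi(u)$ kills one copy. The greedy $3$-choosability proof then runs along each copy $u\to x_i\to y_i\to z_i$. Your single-wheel idea might be made to work for step (3), but you would still need a replacement argument for step (4) --- most naturally, a direct proof that your $G$ is $3$-choosable.
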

\begin{proof}
Consider the graph $H$ in Figure \ref{fig: Multigraph planar (3,3,4)} with
a given local partition. We 
take three copies $G_1, G_2, G_3$ of the graph $H$, and denote the 
vertex in $G_i$
corresponding to $u$ by $u_i$, to $x$ by $x_i$, to $y$ by $y_i$, 
and to $z$ by $z_i$.
Next,
\begin{itemize}
    \item for $G_1$, we take $(a,b,c)=(1,2,3)$,
    \item for $G_2$, we take $(a,b,c)=(2,1,3)$, and
    \item for $G_3$, we take $(a,b,c)=(3,1,2)$.
\end{itemize}
The local $3$-partition of $G_i$ with colors $(a,b,c)$ has the property 
that all four possible combinations $(b,b),(b,c),(c,b),(c,c)$ for 
colors of $x_i$ and $z_i$ implies that there is no legal
choice of color for $y_i$, given that the vertex $u_i$ is colored $a$ 
for $i\in [3]$. Next, identify $u_1, u_2, u_3$ into a single vertex 
$u$, and denote the obtained graph by $G$. Let $\{c_v\}$ be the 
obtained local $3$-partition of $G$. Because of the aforementioned
property, $G$ 
is not conflict $\{c_v\}$-colorable, and thus $\sing(G)=4$. 
\begin{figure}
    \centering
        \begin{tikzpicture}        
        \node[mynode] (v_0) at (0,0) {};
        \node[anchor=north] at (v_0.south) {$u$};
        \node[mynode] (v_1) at (-3,4) {};
        \node[anchor=east] at (v_1.west) {$x$};
        \node[mynode] (v_2) at (0,4) {};
        \node[anchor=south] at (v_2.north) {$y$};
        \node[mynode] (v_3) at (3,4) {};
        \node[anchor=west] at (v_3.east) {$z$};
        
        \draw (v_0) --  node[myconflictx, left] {$a$}
                        node[myconflicty, left] {$a$} (v_1);
        \draw (v_0) --  node[myconflictx, left] {$a$} 
                        node[myconflicty, left] {$a$} (v_2);
        \draw (v_0) --  node[myconflictx, right] {$a$} 
                        node[myconflicty, right] {$a$} (v_3);
        \draw (v_1) to[bend left]   node[myconflictx, above, inner sep =0.5mm] {$b$}
                                    node[myconflicty, above, inner sep =0.5mm] {$b$} (v_2);
        \draw (v_1) to[bend right]  node[myconflictx, above, inner sep =0.5mm] {$c$}
                                    node[myconflicty, above, inner sep =0.5mm] {$b$} (v_2);
        \draw (v_2) to[bend left]   node[myconflictx, above, inner sep =0.5mm] {$c$}
                                    node[myconflicty, above, inner sep =0.5mm] {$c$} (v_3);
        \draw (v_2) to[bend right]  node[myconflictx, above, inner sep =0.5mm] {$c$}
                                    node[myconflicty, above, inner sep =0.5mm] {$b$} (v_3);
    \end{tikzpicture}
    \caption{A multigraph $H$ used as a subgraph in the construction of a planar multigraph $G$ with $(\chsep(G), \chad(G), \sing(G))=(3,3,4)$.}
    \label{fig: Multigraph planar (3,3,4)}
\end{figure}

Next, we prove that $\chsep(G)=\chad(G)= 3$, by showing that the graph 
is $3$-choosable and not separation $2$-choosable. First, let $L$ be 
any $3$-list assignment and let $a\in L(u)$. We define an $L$-coloring 
$\varphi$ explicitly for the subgraphs $G_i$ of $G$. 
Set $\varphi(u)=a$, then define (in order), for each $i\in [3]$,
\begin{align*}    
\varphi(x_i)&=a_{i1}\in L(x_i)\setminus\{a\}, \\
\varphi(y_i)&=a_{i2}\in L(y_i)\setminus\{a,a_{i1}\}, \text{ and} \\\varphi(z_i)&=a_{i3}\in L(z_i)\setminus\{a, a_{i2}\}. 
\end{align*}
That $G$ is not separation $2$-choosable follows from Proposition \ref{prop:disjointcycles}.
\end{proof}

The following remains open.

\begin{problem}
    Is there a planar (simple) graph $G$ satisfying that 
    $$
    (\chsep(G), \chad(G), \sing(G))=(3,3,4)?
    $$
\end{problem}

Next, we shall prove that there is no planar graph such that
$\ch_{sep}(G) < \ch_{ad}(G) < \single(G)$, that is, Problem
\ref{prob:alldiff} has a negative answer for the family of 
planar graphs.

We shall need the following lemma, the proof of which
is left to the reader. A {\em $k$-wheel} is a graph
obtained from a cycle of length $k-1$ by adding a new vertex and
joining it to every vertex of the cycle by an edge.

\begin{lemma}
    \label{lem:wheel}
    A $6$-wheel is not separation $2$-choosable.
\end{lemma}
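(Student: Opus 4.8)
\textbf{Proof proposal for Lemma~\ref{lem:wheel}.} The plan is to exhibit an explicit separated $2$-list assignment $L$ for the $6$-wheel $W$ that admits no proper $L$-coloring. Write the rim as a $5$-cycle $v_1v_2v_3v_4v_5$ and let $h$ be the hub adjacent to all of $v_1,\dots,v_5$. The natural first move is to pin down the hub: assign $L(h)=\{1,2\}$ and arrange the rim lists so that neither choice of colour for $h$ can be completed along the $5$-cycle. For the colour $1$ on $h$ we want the ``reduced'' lists $L(v_i)\setminus\{1\}$ (which, by the separation condition, each have size at least $1$ after deleting at most one common colour with $h$) to force a proper colouring of the odd cycle $C_5$ to fail; since an odd cycle is not $1$-choosable in the trivial sense, we really need the reduced lists to have size roughly $2$ away from $h$'s colour, i.e.\ we should make the separation bite only mildly. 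Concretely, I would try $L(v_i)=\{1\}\cup S_i$ where the sets $S_i$ are chosen so that (a) adjacent $S_i,S_{i+1}$ and $S_i$ and $\{1,2\}=L(h)$ overlap in at most one colour, and (b) after removing $1$ we are left with a $2$-list assignment on $C_5$ that is not colourable, e.g.\ all reduced lists equal to a common $2$-set $\{a,b\}$ — which fails precisely because $C_5$ is odd. Then symmetrically handle $h\mapsto 2$ with a second common $2$-set on the odd rim.

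The key steps, in order, are: (1) set $L(h)=\{1,2\}$; (2) choose rim lists of the form $L(v_i)=\{1,2\}$ for some vertices and $\{a,b\}$ / $\{c,d\}$ for others so that each rim vertex shares colour $1$ with $h$ in the case $h=1$ and colour $2$ in the case $h=2$, but the rim-to-rim intersections are all of size $\le 1$; (3) verify the separation condition edge by edge (there are only $10$ edges); (4) check that fixing $h=1$ deletes $1$ from every rim list and leaves a $2$-list assignment on $C_5$ with a structure known to be non-colourable (the cleanest is: all five reduced lists identical — impossible on an odd cycle — or more robustly, a list assignment whose ``conflict graph'' has no independent transversal, which for $C_5$ one checks directly); (5) check the symmetric statement for $h=2$; (6) conclude that no $L$-colouring exists, so $\ch_{sep}(W)>2$. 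A slicker packaging: by Theorem~\ref{th:2sepchoice} (or Proposition~\ref{prop:disjointcycles} after a small reduction), it suffices to locate inside $W$ two cycles of length $\ge 4$ meeting in at most one vertex or one edge; for instance $h v_1 v_2 v_3 v_4 h$ (a $4$-cycle through the hub) and $h v_4 v_5 v_1 h$ share the hub and the vertices $v_1,v_4$, which is too much, so the direct list construction is probably the honest route, but one can instead invoke Theorem~\ref{th:2sepchoice} and show $W$ cannot avoid the forbidden configuration $(H_1,H_2)$ there.

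The main obstacle I expect is the bookkeeping in step~(3)–(5): making the five rim lists simultaneously (i) separated from each other along the $C_5$, (ii) separated from $L(h)$, and (iii) collapsing — under \emph{each} of the two hub colours — to a non-colourable $2$-list assignment on the odd rim. The tension is that the separation condition limits how much two adjacent lists can coincide, so one cannot simply make all rim lists equal to $\{1,2,a\}$ (they are only size $2$ anyway). A workable resolution is to use three colours $\{1,a,b\}$-type patterns: e.g.\ alternate $L(v_i)$ between $\{1,2\}$ and $\{a,b\}$ around the $5$-cycle (forced to break parity somewhere since $5$ is odd), and exploit exactly that forced ``defect'' to show non-colourability; the odd length of the rim is what ultimately powers the argument, mirroring why odd cycles are not $2$-choosable but is slightly more delicate here because one colour of each rim list is tied to the hub. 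Once the explicit $L$ is written down the verification is a finite check, so the creative content is entirely in choosing $L$; everything after that is routine and can reasonably be ``left to the reader'' as the statement already proposes.
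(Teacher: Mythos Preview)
The paper leaves this lemma entirely to the reader, so there is no proof to compare against; the intended argument is presumably the direct construction you outline, and you are right that Proposition~\ref{prop:disjointcycles} does not apply (any two $\geq 4$-cycles in the $6$-wheel meet in at least two vertices).

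Your general strategy is sound, but both concrete templates you propose break down. First, there is a size slip: the rim lists already have size~$2$, so after deleting the hub colour you get lists of size $1$ or $2$, never a genuine ``$2$-list assignment on $C_5$ with all lists equal to $\{a,b\}$''. Worse, if every $L(v_i)=\{1,s_i\}$ with $s_i\notin\{1,2\}$ (forced by separation from $L(h)=\{1,2\}$), then setting $h=1$ forces $v_i\mapsto s_i$; this \emph{succeeds} as a proper colouring of $C_5$ unless some $s_i=s_{i+1}$, but that would give $|L(v_i)\cap L(v_{i+1})|=2$, violating separation. So the ``all rim lists contain $1$'' scheme can never yield a bad assignment. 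Your fallback of alternating rim lists between $\{1,2\}$ and $\{a,b\}$ fails even faster: $L(v_i)=\{1,2\}=L(h)$ already has intersection of size~$2$ with the hub list.

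What actually works is to let some rim lists meet $L(h)=\{1,2\}$ in $\{1\}$, others in $\{2\}$, and one in neither, threading three auxiliary colours around the rim so that each hub choice triggers a cascade of forced colours ending in a dead vertex. For example
\[
L(v_1)=\{1,a\},\quad L(v_2)=\{a,b\},\quad L(v_3)=\{2,b\},\quad L(v_4)=\{2,c\},\quad L(v_5)=\{1,c\}
\]
is separated on all ten edges; $h=1$ forces $v_1=a,\ v_5=c,\ v_2=b,\ v_4=2$, leaving $L(v_3)=\{2,b\}$ with both colours forbidden, while $h=2$ symmetrically kills $v_1$. The creative content is, as you say, entirely in choosing $L$; the point is that your two candidate shapes for $L$ are incompatible with the separation constraint, which is precisely the obstruction you need to design around.
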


A {\em $\geq k$-cycle} is a cycle of length at least $k$.

\begin{theorem}
\label{prop:noplanar}
    There is no planar graph $G$ with 
    $\ch_{sep}(G) < \ch_{ad}(G) < \single(G)$.
\end{theorem}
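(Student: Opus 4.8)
The plan is to argue by contradiction: suppose $G$ is a planar graph with $\ch_{sep}(G) < \ch_{ad}(G) < \single(G)$. Since planar graphs have $\single(G) \le 4$ (by Theorem~\ref{th:orient} in its single-conflict form), the only possibility is the chain $2 < 3 < 4$, i.e.\ $\ch_{sep}(G) = 2$, $\ch_{ad}(G) = 3$, $\single(G) = 4$. So it suffices to show that no planar graph simultaneously satisfies $\ch_{sep}(G) = 2$ and $\single(G) = 4$; equivalently, every separation $2$-choosable planar graph is single conflict $3$-colorable.

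First I would extract structural consequences of $\ch_{sep}(G) = 2$. By Proposition~\ref{prop:disjointcycles}, a separation $2$-choosable graph cannot contain two cycles of length at least $4$ sharing at most one vertex; more generally Theorem~\ref{th:2sepchoice} tells us that the $\ge 4$-cycles of $G$ must overlap heavily (two lollipops/cycles sharing their first vertex, etc.). Combined with Lemma~\ref{lem:wheel} (a $6$-wheel is not separation $2$-choosable), this pins down the ``long-cycle part'' of $G$ to a very restricted configuration — essentially all the $\ge 4$-cycles pass through a small common set of vertices, and the rest of $G$ is built from triangles and trees hanging off. The key point is that after deleting a bounded number of vertices (the common vertices of the long cycles), what remains is a graph whose only cycles are triangles, and such graphs are very close to having an orientation with small outdegree.

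Next I would use these structural facts to build the single conflict $3$-coloring. Given a local $3$-partition $\{c_v\}$, I would color the few ``hub'' vertices greedily first (each hub has only finitely many bad colors forced by conflicts on its incident edges, but since $G$ is planar and separation $2$-choosable the hubs have controlled degree, or at least the conflicts they impose are limited), and then extend to the rest. For the triangle-and-tree remainder, I would invoke the single-conflict analogue of Theorem~\ref{th:orient}: a graph with an orientation of maximum outdegree $\le 2$ is single conflict $3$-colorable. A planar graph all of whose cycles are triangles and which is separation $2$-choosable (hence by Corollary~\ref{cor:triangle} has at most one $\ge 4$-cycle, here zero after hub deletion) is in particular such that its blocks are triangles or edges, giving it an orientation of outdegree $\le 2$; one then checks that the partial coloring on the hubs extends, handling the edges between hubs and remainder by noting each such vertex still has a valid color once we forbid the at most $2$ conflict colors coming from its oriented out-edges plus the conflicts from hub-edges.

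The main obstacle will be making the ``bounded hub set'' argument precise and verifying that the greedy coloring of the hubs, together with the orientation-based extension, never gets stuck — in particular controlling how many conflict constraints a remainder vertex can inherit from hub-edges, and checking the low-degree planar cases (small graphs, graphs that are themselves near-wheels or theta graphs) separately. I expect the bulk of the work to be a careful case analysis driven by Theorem~\ref{th:2sepchoice}: enumerate the ways two overlapping $\ge 4$-cycles (or lollipops) can sit in a planar graph, and in each case exhibit the desired orientation or the direct $3$-coloring. Planarity is used to bound how such overlapping cycles can coexist with triangles, ruling out dense configurations like the $6$-wheel.
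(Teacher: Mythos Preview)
Your reduction to the chain $2<3<4$ is the same starting point as the paper, but thereafter the two approaches diverge. The paper does \emph{not} try to build a single conflict $3$-coloring of an arbitrary separation $2$-choosable planar graph. Instead, it takes a vertex-minimal counterexample $G$: minimality forces $\single(G-v)=3$ for every $v$, hence $\delta(G)\ge 3$; Proposition~\ref{prop:maxdeg} then forces $\Delta(G)\ge 5$. With $\delta(G)\ge 3$ one argues $G$ is $2$-connected, picks a vertex $x$ of degree $\ge 5$, and analyzes how $N_G(x)$ sits on cycles of $G-x$. Every case (all of $N_G(x)$ on a cycle, four of them, at most three) produces either a $6$-wheel subgraph (killed by Lemma~\ref{lem:wheel}) or two $\ge 4$-cycles sharing at most one vertex (killed by Proposition~\ref{prop:disjointcycles}), contradicting $\ch_{sep}(G)=2$. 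No coloring is ever constructed.

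Your constructive plan has a genuine gap at the ``bounded hub set'' step. Nothing in separation $2$-choosability bounds the degree of a hub vertex or the number of hubs, and more seriously, a remainder vertex can have arbitrarily many edges to the hubs: after orienting the remainder with outdegree $\le 2$, such a vertex already has two forbidden colors from its out-edges and then one additional conflict per hub-edge, so with $3$ colors the greedy extension fails as soon as a single hub-edge is present. Your parenthetical ``the hubs have controlled degree, or at least the conflicts they impose are limited'' is precisely the unproved assertion. The paper sidesteps this entirely by using minimality to obtain $\delta(G)\ge 3$, which is what makes the neighborhood-of-$x$ structural argument go through; without that degree condition you are working in a much larger class of graphs and the cycle/block analysis you sketch does not close.
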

\begin{proof}
    Assume, for a contradiction, that there is such a graph $G$. 
    Then $\ch_{sep}(G)=2$
    and $\single(G) =4$. We assume that $G$ is vertex-minimal with 
    respect to the property that $|\single(G) - \ch_{sep}(G)| \geq 2$,
    so $G$ is connected. Moreover, for every vertex $v$ of $G$, 
    $\single(G-v)=3$, and so, $\delta(G) \geq 3$.
Additionally, 
    by Proposition \ref{prop:maxdeg}, $\Delta(G) \geq 5$.

By Proposition \ref{prop:disjointcycles},
$G$ does not contain two $\geq 4$-cycles with 
at most one common vertex. 
Since $G$ has minimum degree at least $3$, it follows that it 
is $2$-connected. (Since otherwise, if $v$ is a cut-vertex, then one of 
the components of $G-v$ is outerplanar, because its longest cycle is a 
triangle.)

Let $x$ be a vertex of degree at least $5$ in $G$, and consider the set 
of neighbors $N_G(x)$ of $x$ in $G$. We shall assume that $x$ has
degree $5$ in $G$; the case when $d_G(x) > 5$ can be dealt with 
similarly.

Now, since $G$ is $2$-connected and $\delta(G-x) \geq 2$,
$G-x$ contains a cycle. Suppose first that none of the vertices
in $N_G(x)$ is contained in a cycle of $G-x$, and let $T$ be a 
vertex-minimal subtree of $G-x$ containing $N_G(x)$ (such a tree exists,
since $G-x$ is connected). Then some vertex $y$ of $N_G(x)$ is a leaf of 
$T$, and since $\delta(G) \geq 3$, this means that $y$ is a cut-vertex 
of $G$, which contradicts that $G$ is $2$-connected. 
Thus, some vertex of $N_G(x)$ is contained in a cycle in $G-x$.

Let $C$ be a cycle in $G-x$ containing the largest number of vertices
from $N_G(x)$. 
Suppose first that all vertices of $N_G(x)$ are contained in $C$.
If $C$ is a $5$-cycle, then by Lemma \ref{lem:wheel}, 
$G$ is not separation
$2$-choosable, a contradiction. On the other hand, if $C$ contains at least $6$ vertices,
then the same conclusion follows from Proposition \ref{prop:disjointcycles}.

Suppose now that four vertices from $N_G(x)$ are contained in $C$,
and that $x_5 \in N_G(x)$ is not contained in $C$.
Since $G-x$ is connected, there is a shortest path $P$ in $G-x$ from
$x_5$ to $V(C)$. If the endpoint of $P$ on $V(C)$ is not in $N_G(x)$,
or $P$ has at least three vertices, then Proposition \ref{prop:disjointcycles}
implies that $G$ is not separation $2$-choosable, so we assume
that $x_5$ is adjacent to $x_4 \in V(C) \cap N_G(x)$. 
By the choice of $C$, 
$x_5$ has no other neighbor on $C$, so since $\delta(G) \geq 3$,
$x_5$ has a neighbor $u \notin N_G(x)$. Again, by the choice
of $C$, if there is a path from $u$ to $V(C)$ in $G-x_5$, then
$x_4$ is the endpoint of such a path. However, then $G$ contains two $\geq 4$-cycles with
at most one common vertex, a contradiction to the separation $2$-choosability of $G$.
On the other hand, if there is no such path from $x_5$ to $V(C)$,
then $x_5$ is a cut-vertex of $G$, a contradiction.

Now assume at most three vertices from $N_G(x)$ are contained in $C$,
and let $x_4,x_5$ be vertices in $N_G(x)$ that are not contained in
$C$. Suppose first that $x_4$ and $C$ are contained in the same
block in $G-x$. Then there must be two internally disjoint paths from $x_4$ to $V(C))$ in $G-x$
with distinct endpoints in $C$. Since $G$ is planar,
this contradicts the choice
of $C$. Thus, $x_4$ and $C$ are contained in different blocks in $G-x$,
as is also $x_5$ and $C$. 

Consider the block graph of $G-x$. 
Now, if $x_4$ and $x_5$ are contained in the same block in $G-x$, then
either they are contained in a common cycle or, if $x_4x_5$
is a cut-edge of $G-x$, then since $\delta(G) \geq 3$, 
there is an end-block
in $G$ that contains a $\geq 4$-cycle that is disjoint from $C$. 
In both cases, $G$ contains 
two $\geq 4$-cycles with at most common vertex, 
contradicting that $G$ is
separation $2$-choosable. Hence, $x_4$, $x_5$ and $C$ are contained in
three different blocks in $G - x$. Now, since $\delta(G) \geq 3$, this 
implies that an end-block of $G-x$ contains a 
$\geq 4$-cycle with at most one
common vertex with $C$, a contradiction to the fact that $G$ is
separation $2$-choosable.
\end{proof}

\bigskip

	Finally, let us note that there are planar graphs with 
	$$3=\chi_{ad}(G) =\ch_{sep}(G) < \ch_{ad}(G) = \single(G) =4.$$

    \begin{figure}[h]
        \centering
        \begin{tikzpicture}
        
        \node[mynode] (v_0) at (0,0) {};
        \node[anchor=north] at (v_0.south) {$v$};
        \node[mynode] (v_1) at (-5,2) {};
        \node[anchor=east] at (v_1.west) {$x_1$};
        \node[mynode] (v_2) at (-3,2) {};
        \node[anchor=west] at (v_2.east) {$y_1$};
        \node[mynode] (v_3) at (-1,2) {};
        \node[anchor=east] at (v_3.west) {$x_2$};
        \node[mynode] (v_4) at (1,2) {};
        \node[anchor=west] at (v_4.east) {$y_2$};
        \node[mynode] (v_5) at (3,2) {};
        \node[anchor=east] at (v_5.west) {$x_3$};
        \node[mynode] (v_6) at (5,2) {};
        \node[anchor=west] at (v_6.east) {$y_3$};
        \node[mynode] (v_7) at (0,4) {};
        \node[anchor=south] at (v_7.north) {$u$};
        
        \draw (v_0) -- node[myedge, below] {$c$} (v_1);
        \draw (v_0) -- node[myedge, below] {$c$} (v_2);
        \draw (v_0) -- node[myedge, below] {$b$} (v_3);
        \draw (v_0) -- node[myedge, below] {$b$} (v_4);
        \draw (v_0) -- node[myedge, below] {$4$} (v_5);
        \draw (v_0) -- node[myedge, below] {$4$} (v_6);
        
        \draw (v_7) -- node[myedge, above] {$a$} (v_1);
        \draw (v_7) -- node[myedge, above] {$a$} (v_2);
        \draw (v_7) -- node[myedge, above] {$a$} (v_3);
        \draw (v_7) -- node[myedge, above] {$a$} (v_4);
        \draw (v_7) -- node[myedge, above] {$a$} (v_5);
        \draw (v_7) -- node[myedge, above] {$a$} (v_6);
    
        \draw (v_1) -- node[myedge, above] {$b$} (v_2);
        \draw (v_3) -- node[myedge, above] {$c$} (v_4);
        \draw (v_5) -- node[myedge, above] {$b$} (v_6);
     \end{tikzpicture}
\caption{A planar graph with a $4$-edge coloring.}
\label{fig:figure2}
\end{figure}
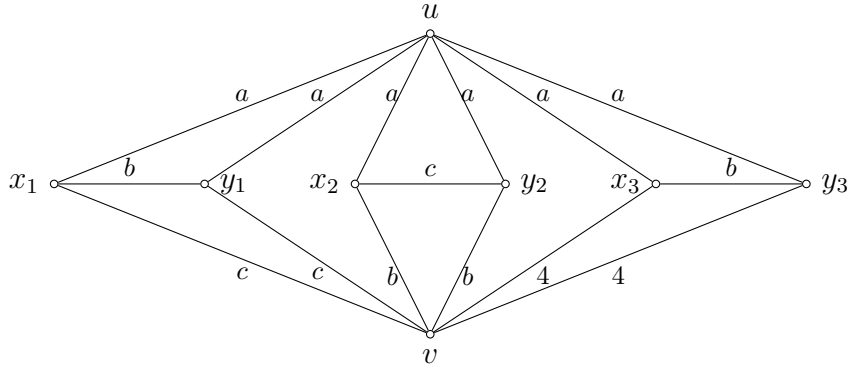

\begin{theorem}
\label{prop:adapted}
	There is a planar graph $G$ with 
    $$3=\chi_{ad}(G)=\ch_{sep}(G) < \ch_{ad}(G) = \single(G) =4.$$
\end{theorem}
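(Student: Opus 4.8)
The plan is to let $G$ be built from the gadget $H$ of Figure~\ref{fig:figure2} (the non-adjacent apices $u,v$, each joined to $x_1,y_1,x_2,y_2,x_3,y_3$, plus the edges $x_iy_i$) by taking three copies $H^{(1)},H^{(2)},H^{(3)}$, identifying their apex $u$ into a single vertex, and renaming the colours in $H^{(j)}$ so that the colour in the role of ``$a$'' becomes, in the three copies, $1,2,3$ respectively (explicitly $(a,b,c)\mapsto(1,2,3),(2,1,3),(3,1,2)$, keeping the colour $4$ fixed), exactly the device used in the proof of Theorem~\ref{Proposition: Multigraph planar (3,3,4)}. Each $H$ is planar ($K_{2,6}$ is planar and the $x_iy_i$ can be drawn as short arcs), and identifying planar pieces at a single vertex preserves planarity, so $G$ is planar. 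Since the vertices $x_i^{(j)},y_i^{(j)}$ have degree $3$ and $u$ and each $v^{(j)}$ are adjacent only to such vertices, $G$ is $3$-degenerate, so by the single-conflict form of Theorem~\ref{th:orient} (the remark after it) $\single(G)\le 4$; hence by \eqref{eq:olika} all four invariants are at most $4$. It remains to establish $\chad(G)\ge 4$, $\chsep(G)\le 3$, $\chsep(G)\ge 3$ and $\chiad(G)\le 3$; the bound $\chiad(G)\ge 3$ is free, since $G$ contains pairwise edge-disjoint triangles $v^{(j)}x_i^{(j)}y_i^{(j)}$, so $G-e$ is non-bipartite for every edge $e$ and $G$ is not adaptably $2$-colourable.

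For $\chsep(G)\le 3$, given a separated $3$-list assignment $L$, colour $u$ by any $\varphi(u)\in L(u)$ and each $v^{(j)}$ by any $\varphi(v^{(j)})\in L(v^{(j)})$ (legal since $u$ and $v^{(j)}$ are not adjacent); then for each pair $x=x_i^{(j)}$, $y=y_i^{(j)}$ each of $x,y$ has at most the colours $\varphi(u),\varphi(v^{(j)})$ forbidden, hence at least one colour left, and it cannot happen that both are forced to the same single colour $g$, for that would mean $L(x)=\{\varphi(u),\varphi(v^{(j)}),g\}=L(y)$, contradicting $|L(x)\cap L(y)|\le 1$; so the edge $xy$ is also satisfied. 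For the matching lower bound, $G$ contains $K_{2,4}$ (take $u$, some $v^{(j)}$, and four of the middle vertices), which is not separation $2$-choosable by Proposition~\ref{Lemma: Bipartite invariants k -> k+1}, and non-separation-$2$-choosability passes to supergraphs (extend a bad assignment by giving each new vertex a private pair of fresh colours); hence $\chsep(G)=3$. Finally, for $\chiad(G)\le 3$, given any $f : E(G)\to\{1,2,3\}$ colour $\varphi(u)=1$ and $\varphi(v^{(j)})=\sigma_j(a)$ for each $j$; then every middle vertex again has at least two admissible colours, so one may pick $\varphi(x_i^{(j)})\ne\varphi(y_i^{(j)})$, which also respects the edge $x_i^{(j)}y_i^{(j)}$. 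Thus $\chiad(G)=\chsep(G)=3$.

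The heart of the argument is $\chad(G)\ge 4$, witnessed by the displayed $4$-edge-colouring together with the $3$-list assignment $L(u)=\{1,2,3\}$, $L(v^{(j)})=\sigma_j(\{b,c,4\})$ and $L(x_i^{(j)})=L(y_i^{(j)})=\sigma_j(\{a,f_i,g_i\})$, where $f_i$ and $g_i$ are the colours of the edges $vx_i$ and $x_iy_i$ in $H$. In any $L$-colouring $\varphi$, the colour $\varphi(u)\in\{1,2,3\}$ equals $\sigma_j(a)$ for exactly one $j$; in that copy the edges at $u$ force $\varphi(x_i^{(j)}),\varphi(y_i^{(j)})\ne\sigma_j(a)$ for every $i$, while $\varphi(v^{(j)})$ equals $\sigma_j(f_i)$ for exactly one branch $i$, where the edges at $v^{(j)}$ additionally force $\varphi(x_i^{(j)}),\varphi(y_i^{(j)})\ne\sigma_j(f_i)$, leaving only the colour $\sigma_j(g_i)$ for both of them; but $\sigma_j(g_i)$ is the colour of the edge $x_i^{(j)}y_i^{(j)}$, so $\varphi$ is not adapted to $f$. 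Hence $\chad(G)\ge 4$, and with the upper bound above $\chad(G)=\single(G)=4$. The main obstacle, and the reason one copy of $H$ does not suffice, is pinning down the colour of the shared apex: giving $u$ any colour other than ``$a$'' switches off all edges incident with $u$, after which the rest of the graph is easily coloured, so the colour-rotation-and-identification trick is needed — at the cost of re-checking (done above via $3$-degeneracy and the private-fresh-colours argument) that the enlarged graph still has $\single(G)\le 4$ and $\chiad(G)=\chsep(G)=3$.
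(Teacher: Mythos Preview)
Your construction and overall strategy match the paper's proof exactly (three colour-rotated copies of the gadget in Figure~\ref{fig:figure2} glued at $u$), and your arguments for $\single(G)\le 4$, $\chsep(G)\le 3$, $\chsep(G)\ge 3$, and $\chad(G)\ge 4$ are all correct. Your separation-choosability arguments are in fact slightly cleaner than the paper's: you avoid the case split in the upper bound by exploiting $|L(x)\cap L(y)|\le 1$ directly, and you obtain the lower bound via the $K_{2,4}$ subgraph rather than Proposition~\ref{prop:disjointcycles}.

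There is, however, a genuine gap in your argument for $\chiad(G)\le 3$. You set $\varphi(u)=1$ and $\varphi(v^{(j)})=\sigma_j(a)$, then claim that every middle vertex has at least two admissible colours. This is false for $j\in\{2,3\}$: with your permutations $\sigma_2(a)=2$ and $\sigma_3(a)=3$, so in copy $j=2$ a middle vertex $x$ may have colour $1$ forbidden (if $f(ux)=1$) \emph{and} colour $2$ forbidden (if $f(v^{(2)}x)=2$), leaving only colour $3$ admissible. If the same happens for its partner $y$ and $f(xy)=3$, no adapted colouring extending your choices exists. The paper's fix is immediate and you should use it: any proper vertex colouring is automatically adapted to every edge colouring (a conflict on $xy$ requires $\varphi(x)=\varphi(y)$), and $G$ is $3$-colourable---take $u$ and each $v^{(j)}$ to colour $1$, each $x_i^{(j)}$ to colour $2$, each $y_i^{(j)}$ to colour $3$---so $\chiad(G)\le\chi(G)=3$.
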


\begin{proof}
	Consider the graph $G$ in Figure \ref{fig:figure2}.
	We take $3$ copies $G_1, G_2, G_3$ of the graph $G$, and denote 
    the vertices corresponding to $u$, by $u_1, u_2, u_3$, respectively, and similarly for $v$.
\begin{itemize}
	
	\item For $G_1$, we take $(a,b,c)=(1,2,3)$,

	\item for $G_2$, we take $(a,b,c) = (2,3,1)$, and

	\item for $G_3$, we take $(a,b,c) = (3,1,2)$.

\end{itemize}
	Next, we identify the vertices $u_1, u_2, u_3$ into a single vertex $u$, and denote the obtained
	graph by $H$. We define a list assignment $L$ of $H$ by letting $j \in L(x)$ if and only if
	an edge incident to $x$ is colored $j$. This yields as $3$-list assignment of $H$ for which 
	there is no adapted list coloring.
	Thus $\single(H)= \ch_{ad}(H)=4$.
	However, $H$ is adaptably $3$-colorable, since it is $3$-colorable.

    Let us show that $\chsep(H)=3$. By Proposition \ref{prop:disjointcycles}, 
    $\chsep(H)\geq 3$, so we must show $\chsep(H)\leq3$. 
    Consider a separated $3$-list assignment $L$ for $H$. We define 
    an $L$-coloring $\varphi$ explicitly for one of the subgraphs 
    $G_r$ of $H$. Set $\varphi(u_r)=j\in L(u_r)$ and 
$\varphi(v_r)=k\in L(v_r)$. For every $i\in [3]$, color $x_i$ and $y_i$ according to the 
    following two cases (with vertex labels as in Figure \ref{fig:figure2}).

    \vspace{\baselineskip}

    \textbf{Case 1.} $\{j,k\}\subseteq L(x_i)$. \hfill \\
    Define $\varphi(x_i)=c_i\in L(x_i)\setminus\{j,k\}$. Then, since $L(y_i)$ can only contain one of $j$, $k$, and $c_i$ it is possible to choose $\varphi(y_i)\in L(y_i)\setminus\{j,k,c_i\}$. 

    \vspace{\baselineskip}
    
    \textbf{Case 2.} $\{j,k\}\nsubseteq L(x_i)$. \hfill \\
    We assume, without loss of generality, that $k\notin L(x_i)$. Let $c_i\in L(x_i)\setminus\{j\}$ and consider the following two subcases. If $L(y_i)=\{j,k,c_i\}$, then color $x_i$ with $\varphi(x_i)\in L(x_i)\setminus \{j,c_i\}$ and let $\varphi(y_i)=c_i$. Otherwise, if $L(y_i)\neq\{j,k,c_i\}$, define $\varphi(y_i)\in L(y_i)\setminus\{j,k,c_i\}$ and set $\varphi(x_i)=c_i$. 

    \vspace{\baselineskip}
    
    Color the other subgraphs $G_r$ of $H$ in the same way, keeping the color of $u_r$. 
The obtained coloring $\varphi$ is an $L$-coloring. Hence, $\chsep(H)\leq3$.
\end{proof}

As regards the relation between $\ch_{sep}$ and $\chi_{ad}$,  let us briefly remark that
all bipartite graphs have adaptable chromatic number $2$, while e.g.\ planar graphs,
such as $K_{2,4}$, satisfy
$\ch_{sep}(K_{2,4}) = 3$. Conversely, there are known examples of 
planar graphs with $\chi_{ad} > \ch_{sep}$.
Indeed, Chen et al.\ \cite{ChenLihWang} proved that planar graphs without
$5$-cycles and normally adjacent $4$-cycles are separation $3$-choosable.
Nevertheless, Esperet et al.\ \cite{EsperetMontassierZhu} showed that there are $5$-cycle-free
planar graphs $G$ with adaptable chromatic number $4$. Moreover, these examples satisfy that all $4$-cycles
are pairwise disjoint, so they are separation $3$-choosable.

\bigskip

	We conclude this section by deducing the following, which resembles
	the main result of \cite{Abe}. Note that we cannot prove a full characterization
	in the following corollary; in particular, it remains 
	an open problem whether  all planar graphs are separation $3$-choosable.

\begin{corollary}
\label{cor:planar}
	For $(i,j,k) \in \mathbf{N}^3 \setminus \{(3,3,4),(4,4,4)\}$,
	there is a planar graph $G$ with 
	$(\ch_{sep}(G), \ch_{ad}(G), \single(G)) = (i,j,k)$ 
	if and only if
	$$(i,j,k) \in \{(1,1,1), (2,2,2), (2,3,3), (3,3,3), (3,4,4)\}.$$
\end{corollary}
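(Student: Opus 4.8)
The plan is to prove both implications by a short case analysis driven by the chain $\single(G)\ge\ch_{ad}(G)\ge\ch_{sep}(G)$ of \eqref{eq:olika} together with two facts about planar graphs. First, the single conflict version of Theorem~\ref{th:orient} gives $\single(G)\le 4$ for every planar $G$, since a planar graph has an orientation with all outdegrees at most $3$. Second, the argument of Theorem~\ref{prop:noplanar} in fact establishes the stronger statement that \emph{no planar graph has $\ch_{sep}(G)=2$ and $\single(G)=4$}: a vertex-minimal planar graph with $\single-\ch_{sep}\ge 2$ must have $\ch_{sep}=2$ and $\single=4$ (because $\ch_{sep}=1$ forces $G$ edgeless and hence $\single=1$, and $\single\le 4$), and the remainder of that proof never refers to $\ch_{ad}$. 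Consequently every planar graph satisfies $\single(G)-\ch_{sep}(G)\le 1$.

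For necessity, let $(i,j,k)=(\ch_{sep}(G),\ch_{ad}(G),\single(G))$ for a planar $G$ with $(i,j,k)\notin\{(3,3,4),(4,4,4)\}$; then $i\le j\le k\le 4$ and $k-i\le 1$. If $i=1$ then $G$ is edgeless and $(i,j,k)=(1,1,1)$. If $i=2$ then $k\in\{2,3\}$; when $k=2$ the chain forces $j=2$, and when $k=3$ we have $\single(G)=3\ne 2$, so by the fact that the graphs with single conflict chromatic number $2$ are precisely those with adaptable choosability $2$ (Proposition~\ref{prop:singconf2} and Theorem~\ref{th:2adapted}) we get $\ch_{ad}(G)\ne 2$, hence $j=3$ and $(i,j,k)=(2,3,3)$. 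If $i=3$ then $k\in\{3,4\}$; $k=3$ forces $j=3$, giving $(3,3,3)$, while $k=4$ leaves $j\in\{3,4\}$, i.e.\ $(3,3,4)$ or $(3,4,4)$, the first excluded by hypothesis. Finally, if $i=4$ then $k=4$ and $j=4$, i.e.\ $(4,4,4)$, again excluded. Hence $(i,j,k)$ lies in the stated five-element set.

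For sufficiency I would exhibit a planar witness for each triple: $(1,1,1)$ by $K_1$; $(2,2,2)$ by $K_2$, which has $\single\le\lceil\Delta/2\rceil+1=2$ by Proposition~\ref{prop:maxdeg}; $(2,3,3)$ by two disjoint triangles joined by an edge, which is separation $2$-choosable by Corollary~\ref{cor:triangle}, not adaptably $2$-choosable because it contains two disjoint triangles, and has $\single\le\lceil\Delta/2\rceil+1=3$ by Proposition~\ref{prop:maxdeg}; $(3,3,3)$ by the cube graph $Q_3$ (connected, planar, $\Delta=3$, two disjoint $4$-cycles), via Proposition~\ref{prop:all3}; and $(3,4,4)$ by the planar graph constructed in the proof of Theorem~\ref{prop:adapted}. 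The only genuinely non-routine point is the exclusion of $(2,3,4)$ and $(2,4,4)$ --- both of which have $\single-\ch_{sep}=2$ --- and this is exactly where one must invoke the observation that the proof of Theorem~\ref{prop:noplanar} forbids \emph{any} planar graph with separation choosability $2$ and single conflict chromatic number $4$; I would flag this strengthening explicitly (as a remark preceding the corollary or at the start of its proof), since Theorem~\ref{prop:noplanar} was stated only for the strict three-term inequality. Everything else is bookkeeping with \eqref{eq:olika} and the bound $\single(G)\le 4$ for planar $G$.
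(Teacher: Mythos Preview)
Your proof is correct and follows essentially the same route as the paper's: the same chain~\eqref{eq:olika}, the planar bound $\single\le 4$, the equivalence of adaptable $2$-choosability and single conflict $2$-colorability to rule out $(2,2,k)$ with $k\ge 3$, and the same witnesses (up to cosmetic choices: the paper uses a tree for $(2,2,2)$, a triangle-only cactus for $(2,3,3)$, Proposition~\ref{prop:all3} for $(3,3,3)$, and Theorem~\ref{prop:adapted} for $(3,4,4)$).

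Your necessity argument is in fact more complete than the paper's. The paper's proof of the corollary only explicitly excludes $(2,2,3)$ and leaves the exclusion of $(2,3,4)$ and $(2,4,4)$ implicit. You correctly observe that $(2,3,4)$ is covered by the \emph{statement} of Theorem~\ref{prop:noplanar}, while $(2,4,4)$ requires the strengthening that its \emph{proof} never uses $\ch_{ad}$ and hence actually shows no planar graph has $\ch_{sep}=2$ and $\single=4$. Flagging this explicitly, as you propose, is a genuine improvement in exposition; the paper does not make this point in the proof of the corollary.
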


\begin{proof}
The case when $\ch_{sep}=1$ is trivial.
A tree satisfies that $\ch_{sep} = \ch_{ad} = \single =2$, since it is
$1$-degenerate. Since $\ch_{ad}(G) = \single(G)$ if $\ch_{ad}(G)=2$,
there is no graph with $(\ch_{sep}, \ch_{ad}, \single) =(2,2,3)$.
On the other hand, a cactus graph with at least two disjoint cycles,
and where all cycles are triangles,
satisfies $(\ch_{sep}, \ch_{ad}, \single)=(2,3,3)$.

Proposition \ref{prop:all3} yields a family of planar graphs with
 $(\ch_{sep}, \ch_{ad}, \single)=(3,3,3)$, 
and
finally, the graph in Theorem \ref{prop:adapted} shows
that there are graphs with $(\ch_{sep}, \ch_{ad}, \single)=(3,4,4)$.
\end{proof}


\addcontentsline{toc}{chapter}{References}
{}

\end{document}